\documentclass[11pt]{article}
\title{
Translation-finite sets, and weakly compact derivations from $\lp{1}(\Z_+)$ to its dual}
\author{Y. Choi, M. J. Heath%
\thanks{The 2nd author is supported by post-doctoral grant SFRH/BPD/40762/2007 from FCT (Portugal).}
}
\date{\update%
\footnote{MSC2000: Primary 43A20, 43A46, 47B07}%
}

\setlength{\textwidth}{145mm}
\setlength{\oddsidemargin}{7mm}
\setlength{\textheight}{220mm}

\usepackage[usenames]{color}
\newcommand{\update}{12th October 2009}

\usepackage{amsmath,amsfonts,amssymb}
\numberwithin{equation}{section}


\usepackage{theorem}

\newenvironment{proof}{%
\medskip\noindent{\it Proof}\/.}%
{\hfill$\Box$\break\medskip\ignorespaces}

\newcounter{pulse}
\numberwithin{pulse}{section}
\newtheorem{prop}[pulse]{Proposition}
\newtheorem{lem}[pulse]{Lemma}

\newtheorem{thm}[pulse]{Theorem}
{\theorembodyfont{\upshape}%
\newtheorem{defn}[pulse]{Definition}%

\newtheorem{rem}[pulse]{Remark}
}

\newenvironment{proofof}[1]{%
\medskip\noindent{\it Proof of {#1}\rm}\/.}%
{\hfill$\Box$\break\medskip\ignorespaces}

\usepackage{verbatim}
\newenvironment{asides}{\comment\marginpar{\bf ASIDE}\sf}{\rm\endcomment}

\newenvironment{YCnum}{%
\begin{enumerate}

}{\end{enumerate}\ignorespacesafterend}

\newcommand{\dt}[1]{\emph{#1}\/}  

\renewcommand{\dt}[1]{\textcolor{Bittersweet}{\sf#1}}

\newcommand{\named}[1]{{#1}}  
\renewcommand{\named}[1]{{\sc#1}}  

\newcommand{\Abs}[1]{\left\vert#1\right\vert}
\newcommand{\abs}[1]{\vert#1\vert}
\newcommand{\norm}[1]{\Vert#1\Vert}

\newcommand{\clos}[1]{\overline{#1}}

\newcommand{\lp}[1]{\ell^{#1}}



\newcommand{\Cplx}{{\mathbb C}}
\newcommand{\Nat}{{\mathbb N}}
\newcommand{\Z}{{\mathbb Z}}
\newcommand{\eps}{\varepsilon}

\newcommand{\Bd}{\operatorname{\bf Bd}}

\newcommand{\WAP}{{\sf WAP}} 
\newcommand{\AD}{A({\mathbb D})}  

\newcommand{\sB}{{\sf B}}
\newcommand{\sE}{{\sf E}}

\newcommand{\cV}{{\mathcal V}}

\newcommand{\bbS}{{\mathbb S}}

\begin{document}

\maketitle

\begin{abstract}
We characterize those derivations from the convolution algebra $\ell^1({\mathbb Z}_+)$ to its dual which are weakly compact, providing explicit examples which are not compact. The characterization is combinatorial, in terms of ``translation-finite'' subsets of ${\mathbb Z}_+$, and we investigate how this notion relates to other notions of ``smallness'' for infinite subsets of ${\mathbb Z}_+$. In particular, we prove that a set of strictly positive Banach density cannot be translation-finite; the proof has a Ramsey-theoretic flavour.
\end{abstract}

\section{Introduction}
The problem of determining the weakly compact or compact homomorphisms between various Banach algebras has been much studied; the study of weakly compact or compact derivations, less so.
In certain cases, the geometrical properties of the underlying Banach space play an important role. For instance, by a result of Morris~\cite{SEM_PhD}, every bounded derivation from the \dt{disc algebra} $\AD$ to its dual is automatically weakly compact.
(It had already been shown in~\cite{Bourgain_Acta84} that every bounded operator from $\AD$ to $\AD^*$ is automatically $2$-summing, hence weakly compact; but the proof is significantly harder than that of the weaker result in~\cite{SEM_PhD}.)

In this article, we investigate the weak compactness or otherwise of derivations from the convolution algebra $\lp{1}(\Z_+)$ to its dual. Unlike the case of~$\AD$, the space of derivations is easily parametrized: every bounded derivation from $\lp{1}(\Z_+)$ to its dual is of the form
\begin{equation}\label{eq:main_formula}
D_\psi(\delta_0)=0 \quad\text{and}\quad
 D_\psi(\delta_j)(\delta_k) = \frac{j}{j+k}\psi_{j+k} \qquad(j\in\Nat,k\in\Z_+)
\end{equation}
for some $\psi\in\lp{\infty}(\Nat)$\/.
It was shown in the second author's thesis \cite{MJH_PhD} that $D_\psi$ is compact if and only if $\psi\in c_0$\/, and that there exist $\psi$ for which $D_\psi$ is not weakly compact.

The primary purpose of the present note is to characterize those $\psi$ for which $D_\psi$ is weakly compact (see Theorem~\ref{t:wc-der} below). In particular, we show that there exist a plethora of $\psi$ for which $D_\psi$ is weakly compact but not compact. Our criterion is combinatorial and uses the notion, apparently due to Ruppert, of \dt{translation-finite} subsets of a semigroup. A secondary purpose is to construct various examples of  translation-finite and non-translation-finite subsets of $\Z_+$, to clarify the connections or absence thereof with other combinatorial notions of ``smallness''.

\subsection*{An example}
We first resolve a question from \cite{MJH_PhD}, by giving a very simple example of a $D_\psi$ that is non-compact but is weakly compact.

\begin{prop}\label{p:first-example}
Let $\psi$ be the indicator function of $\{ 2^n \;:\; n\in\Nat \}\subset \Nat$\/. Then $D_\psi$ is non-compact, and the range of $D_\psi$ is contained in $\lp{1}(\Z_+)$\/.

In particular, since $\lp{1}(\Z_+)\subset \lp{p}(\Z_+)$ for every $1<p<\infty$\/, $D_\psi$ factors through a reflexive Banach space and is therefore weakly compact.
\end{prop}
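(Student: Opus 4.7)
The key technical step is to show that each $D_\psi(\delta_j)$ lies in $\lp{1}(\Z_+)$ with an $\lp{1}$-norm bound that is uniform in $j$. Once that is in hand, the containment of the range in $\lp{1}(\Z_+)$ will follow by expanding a general element of $\lp{1}(\Z_+)$ in the standard basis, and the weak-compactness conclusion is then the standard observation that a bounded operator which factors through a reflexive Banach space is weakly compact.

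For the main calculation, I would expand
\[
\norm{D_\psi(\delta_j)}_1 \;=\; \sum_{k\geq 0}\frac{j}{j+k}\,\psi_{j+k} \;=\; j\!\!\sum_{n\,:\,2^n \geq j}\!\frac{1}{2^n}.
\]
Letting $N$ denote the least integer with $2^N \geq j$, the right-hand side is a geometric tail whose sum equals $2j/2^N \leq 2$; hence $\sup_{j}\norm{D_\psi(\delta_j)}_1 \leq 2$. For a general $a = \sum_j a_j\delta_j \in \lp{1}(\Z_+)$ the series $\sum_j a_j D_\psi(\delta_j)$ then converges absolutely in $\lp{1}(\Z_+)$, and by continuity of the inclusion $\lp{1}\hookrightarrow\lp{\infty}$ its $\lp{1}$-limit agrees with the $\lp{\infty}$-valued $D_\psi(a)$. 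Thus $D_\psi$ factors as $\lp{1}(\Z_+)\to\lp{p}(\Z_+)\hookrightarrow\lp{\infty}(\Z_+)$ for every $p\in(1,\infty)$, which yields the stated weak compactness.

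For non-compactness, the quickest route is simply to invoke the result from \cite{MJH_PhD}: $D_\psi$ is compact iff $\psi\in c_0$, and the present $\psi$ is manifestly not in $c_0$. A direct argument is also easy: the sequence $\bigl(D_\psi(\delta_{2^n})\bigr)_n$ is bounded in $\lp{\infty}$, and for $n\neq m$ one has $D_\psi(\delta_{2^n})(\delta_{2^n}) = \tfrac{2^n}{2^{n+1}}\psi_{2^{n+1}} = \tfrac12$ while $D_\psi(\delta_{2^m})(\delta_{2^n}) = \tfrac{2^m}{2^m+2^n}\psi_{2^m+2^n}=0$, since $2^m+2^n$ has two $1$s in its binary expansion and is therefore not a power of $2$. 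Consequently $\norm{D_\psi(\delta_{2^n})-D_\psi(\delta_{2^m})}_\infty \geq \tfrac12$ for $n\neq m$, so no subsequence can converge in $\lp{\infty}$. No serious obstacle is foreseen: the whole argument reduces to the elementary geometric-series estimate displayed above.
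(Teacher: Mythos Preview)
Your proof is correct and follows essentially the same route as the paper: the same geometric-series estimate yielding $\sup_j\norm{D_\psi(\delta_j)}_1\le 2$, the same extension by linearity and continuity to all of $\lp{1}(\Z_+)$, and the same appeal to the compactness criterion from \cite{MJH_PhD} for non-compactness. The additional self-contained argument you supply for non-compactness (via the $\tfrac12$-separated sequence $(D_\psi(\delta_{2^n}))_n$) is a welcome extra that the paper does not include.
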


\begin{proof}
Since $\psi\notin c_0$\/, we know by \cite[Theorem 5.7.3]{MJH_PhD} that $D_\psi$ is non-compact.

We have $D_\psi(\delta_0)=0$\/. For each $j\in\Nat$\/, let $N_j=\min(n\in\Nat\;:\; 2^n \geq j)$\/; then 
\[
\norm{D_\psi(\delta_j)}_1  = \sum_{k\geq 0} \Abs{ \frac{j}{(j+k)}\psi_{j+k} } 
	= \sum_{n \geq N_j} \frac{j}{2^n} 
	= \frac{j}{2^{N_j-1}} \leq 2\,.
\]
By linearity and continuity we deduce that $\norm{D_\psi(a)}_1\leq 2\norm{a}_1$ for all $a\in \lp{1}(\Z_+)$.
The last assertion now follows, by standard results on weak compactness of operators. 
\end{proof}

\begin{rem}
Since $D_\psi$ factors through the \emph{inclusion} map $\lp{1}(\Z_+)\to c_0(\Z_+)$\/, which is known to be $1$-summing%
, it too is $1$-summing.
\end{rem}

This last remark raises the natural question: is \emph{every} weakly compact derivation from $\lp{1}(\Z_+)$ to its dual automatically $p$-summing for some $p<\infty$? The answer, unsurprisingly, is negative: we have deferred the relevant counterexample to an appendix.

\section{Characterizing weakly compact derivations}
We need only the basic results on weak compactness in Banach spaces, as can be found in standard references such as \cite{Meggin_intro}.

Recall that if $X$ is a closed subspace of a Banach space $Y$\/, and $K\subseteq X$\/, then $K$ is weakly compact as a subset of $X$ if and only if it is weakly compact as a subset of~$Y$\/.
Since (by Equation~\eqref{eq:main_formula}) our derivations $D_\psi$ take values in $c_0(\Z_+)$, we may therefore work with the weak topology of $c_0(\Z_+)$ rather than that of $\lp{\infty}(\Z_+)$.

Moreover, we can reduce the verification of weak compactness to that of sequential pointwise compactness. This is done through some simple lemmas, which we give below.

\begin{lem}\label{l:weak-is-ptwise-for-bdd-c_0}
Let $(y_i)$ be a bounded net in $c_0(\Z_+)$\/, and let $y\in c_0(\Z_+)$. Then
$(y_i)$ converges weakly to $y$ if and only if it converges pointwise to $y$\/.
\end{lem}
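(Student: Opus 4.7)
The forward direction is immediate: weak convergence implies convergence against every $\phi\in c_0(\Z_+)^*=\lp{1}(\Z_+)$, and in particular against the point-evaluation functionals $\delta_k\in\lp{1}(\Z_+)$, which gives pointwise convergence on $\Z_+$.

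For the reverse direction, I would use a standard density argument. Suppose $M\geq\sup_i\norm{y_i}_\infty$ and $\norm{y}_\infty\leq M$. Given any $\phi\in\lp{1}(\Z_+)$ and $\eps>0$, choose $N$ so large that $\sum_{k>N}\abs{\phi_k}<\eps$. Then split
\[
\abs{\langle\phi,y_i-y\rangle} \;\leq\; \sum_{k\leq N}\abs{\phi_k}\,\abs{y_i(k)-y(k)} \;+\; 2M\!\!\sum_{k>N}\abs{\phi_k}.
\]
The tail term is bounded by $2M\eps$. The finite head sum tends to $0$ by the assumed pointwise convergence (a finite sum of null nets is null). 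Taking a $\limsup$ in $i$ gives $\limsup_i\abs{\langle\phi,y_i-y\rangle}\leq 2M\eps$, and since $\eps$ was arbitrary we conclude $\langle\phi,y_i\rangle\to\langle\phi,y\rangle$. As $\phi$ was an arbitrary element of $\lp{1}(\Z_+)=c_0(\Z_+)^*$, this is precisely weak convergence.

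There is no real obstacle here; this is a routine application of the principle that on norm-bounded sets in a Banach space $X$, the weak topology coincides with the topology of pointwise convergence against any norm-dense subset of $X^*$. In our case the dense subset is the linear span of $\{\delta_k:k\in\Z_+\}$, whose norm density in $\lp{1}(\Z_+)$ is immediate from the definition of $\lp{1}$. One could alternatively invoke this density principle as a black box, but writing out the two-line $\eps/2$ estimate above seems just as quick.
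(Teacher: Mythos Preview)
Your argument is correct and is exactly the standard $\varepsilon/2$ split the authors have in mind; the paper itself omits the details as ``straightforward'', and your write-up matches the intended proof essentially line for line.
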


The proof is straightforward and we omit the details.

\begin{asides}
\begin{proof}
Trivially, weak convergence implies pointwise convergence. Suppose, conversely, that $y_i\to y$ pointwise. Put $K=\sup_i \norm{y_i}<\infty$\/. Given $\nu\in\lp{1}(\Z_+)$ and $\varepsilon>0$, choose $M$ large enough that $\sum_{m > M}\abs{\nu_m} <\varepsilon(2K)^{-1}$\/.
Since $y_i\to y$ pointwise, for each $m\in\{1,2,\ldots, M\}$ there exists $i_m$ such that $\abs{(y_i)_m-y_m}\leq \varepsilon {\norm{\nu}_1}^{-1}$ for all $i\succeq i_m$\/.
Choose $j_0$ with $j_0\succeq i_m$ for all $m\in\{1,2,\ldots, M\}$: then, for all $i\succeq j_0$,
\[ \begin{aligned}
\abs{\nu(y_i-y)} 
  &  \leq \sum_{m\geq 0} \abs{\nu_m} \abs{(y_i)_m-y_m}
  &  \leq \sum_{m=0}^M \abs{\nu_m} \frac{\varepsilon}{\norm{\nu}_1} + \sum_{m>M} \abs{\nu_m} \norm{y_i-y}_\infty
  \leq
 2\varepsilon\,.
\end{aligned} \]
Hence $\nu(y_i)\to\nu(y)$ for all $\nu\in\lp{1}(\Z_+)$, as required.
\end{proof}
\end{asides}

\begin{lem}\label{l:checking-wc}
Let $T:\lp{1}(\Z_+) \to c_0(\Z_+)$ be a bounded linear map.
Then the following are equivalent:
\begin{YCnum}
\item\label{item2.2.1} $T$ is weakly compact;
\item\label{item2.2.2} every subsequence of $(T(\delta_n))_{n\in\Nat}$ has a further subsequence which converges pointwise to some $y\in c_0(\Z_+)$\/.
\end{YCnum}
\end{lem}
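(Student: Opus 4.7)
\medskip
The plan is to use Lemma~\ref{l:weak-is-ptwise-for-bdd-c_0} to translate weak convergence of bounded sequences in $c_0(\Z_+)$ into pointwise convergence, together with the \named{Eberlein--\Smulian} theorem to pass between relative weak compactness and relative weak sequential compactness.

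For \ref{item2.2.1} $\Rightarrow$ \ref{item2.2.2}, observe that $\{T(\delta_n):n\in\Z_+\}$ is contained in the relatively weakly compact set $T(\{a\in \lp{1}(\Z_+) : \norm{a}_1\leq 1\})$. Given any subsequence of $(T(\delta_n))$, \named{Eberlein--\Smulian} provides a further subsequence that converges weakly in $c_0(\Z_+)$ to some $y$; since this subsequence is bounded, Lemma~\ref{l:weak-is-ptwise-for-bdd-c_0} identifies the weak convergence with pointwise convergence.

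For the converse \ref{item2.2.2} $\Rightarrow$ \ref{item2.2.1}, I would first deduce that $\{T(\delta_n):n\in\Z_+\}$ is relatively weakly compact by running the previous argument in reverse: any subsequence contains, by hypothesis, a further subsequence converging pointwise to some $y\in c_0(\Z_+)$, and since $T$ is bounded Lemma~\ref{l:weak-is-ptwise-for-bdd-c_0} upgrades this to weak convergence, so \named{Eberlein--\Smulian} gives relative weak compactness. To promote this from the generators to all of $T(B)$, where $B$ denotes the closed unit ball of $\lp{1}(\Z_+)$, I would note that every $a\in B$ admits an expansion $a=\sum_{n\geq 0}a_n\delta_n$ with $\sum_n\abs{a_n}\leq 1$, so by continuity of $T$ the series $T(a)=\sum_n a_n T(\delta_n)$ converges in norm and lies in the closed absolutely convex hull of $\{T(\delta_n):n\in\Z_+\}$. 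By \named{Krein's theorem}, this hull is weakly compact, so $T(B)$ is relatively weakly compact.

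The only subtle point is this last step: pointwise/weak compactness information about the generators $T(\delta_n)$ must be boosted to the whole image $T(B)$. This is precisely what Krein's theorem delivers once the generator set has been shown to be relatively weakly compact.
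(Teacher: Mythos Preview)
Your proposal is correct and follows essentially the same route as the paper: identify weak and pointwise convergence on bounded sets via Lemma~\ref{l:weak-is-ptwise-for-bdd-c_0}, establish relative weak compactness of $\{T(\delta_n)\}$, and then invoke \named{Krein's theorem} to pass to the closed balanced convex hull $\clos{T(\sB)}$. The only cosmetic difference is that the paper exploits metrizability of the pointwise topology on bounded subsets of $c_0$ to interchange sequential and non-sequential precompactness, whereas you reach the same conclusion via \named{Eberlein--\Smulian}.
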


\begin{proof}
Let $\sB$ denote the closed unit ball of $\lp{1}(\Z_+)$\/, let $\sE=\{T(\delta_n)\;:\;n\in\Nat\}$, and let $\tau$ denote the topology of pointwise convergence in $c_0$\/.
Note that the restriction of $\tau$ to bounded subsets of $c_0$ is a \emph{metrizable} topology.

If \ref{item2.2.1} holds, then by (the trivial half of) Lemma~\ref{l:weak-is-ptwise-for-bdd-c_0}, the bounded set $T(\sB)$ is $\tau$-precompact, and hence sequentially $\tau$-precompact.
Thus~\ref{item2.2.2} holds.

Conversely, suppose that \ref{item2.2.2} holds, i.e. that $\sE$ is sequentially $\tau$-precompact. Then (again by metrizability) we know that $\sE$ is $\tau$-compact, and hence by Lemma~\ref{l:weak-is-ptwise-for-bdd-c_0} it is weakly precompact. Therefore, by 
\named{Krein's theorem} (as it appears in Bourbaki, see \cite[\S IV.5]{Bourbaki_1-5_trans87}), the closed {\bf balanced} convex hull of $\sE$ is weakly compact.
Since this hull is $\clos{T(\sB)}$, $T$ is weakly compact.
\end{proof}

The following notation will be used frequently. Given a subset $S\subseteq \Z_+$ and $n\in\Z_+$, we denote by $S-n$ the set $\{ t\in \Z_+ \;:\; t+n\in S\}$\/.

We need the following definition, due to Ruppert \cite{Ruppert_WAPsets} in a more general setting.
\begin{defn}[{\cite{Ruppert_WAPsets}}]
Let $S\subseteq \Z_+$\/. We say that $S$ is \dt{translation-finite} (\dt{TF} for short) if, for every sequence $n_1<n_2<\ldots$ in $\Z_+$\/, there exists $k$ such that
\begin{equation}
 \bigcap_{i=1}^k (S-n_i )\quad\text{is finite or empty.}
\end{equation}
\end{defn}
(In the later paper \cite{Chou_TAMS90}, TF-sets are called ``\dt{$R_W$ sets}''; we believe that for our purposes the older terminology of Ruppert is more suggestive and apposite.)

TF-sets were introduced by Ruppert in the investigation of weakly almost periodic subsets of semigroups. Recall that a bounded function $f$ on a semigroup $\bbS$ is said to be \dt{weakly almost periodic} if the set of translates $\{s\cdot f \;:\; s\in \bbS\} \cup \{f\cdot s \;:\; s\in \bbS\}$ is weakly precompact in $\ell^\infty(\bbS)$.
Specializing to the case where the semigroup in question is $\Z_+$\/, one of Ruppert's results can be stated as follows.
\begin{thm}[{\cite{Ruppert_WAPsets}}]\label{t:Ruppert}
Let $S\subseteq\Z_+$\/. Then $S$ is TF if and only if all bounded functions $S\to\Cplx$ are WAP as elements of $\lp{\infty}(\Z_+)$\/. In particular, if $S$ is a TF-set then the indicator function of $S$ belongs to $\WAP(\Z_+)$\/.
\end{thm}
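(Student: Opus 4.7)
The plan is to exploit Grothendieck's double-limit criterion: a bounded $g\in\lp{\infty}(\Z_+)$ is WAP if and only if, for every pair of sequences $(n_i),(m_j)$ in $\Z_+$ for which both iterated limits $\lim_i\lim_j g(n_i+m_j)$ and $\lim_j\lim_i g(n_i+m_j)$ exist, those two values coincide. Two elementary observations then do most of the work: every subset $T$ of a TF-set $S$ is again TF (since $T-n\subseteq S-n$ forces the relevant intersections to shrink), and $\WAP(\Z_+)$ is a uniformly closed linear subspace of $\lp{\infty}(\Z_+)$.

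For the forward implication, I first handle the indicator case. Given $T\subseteq S$, suppose for contradiction that $\chi_T\notin\WAP$; Grothendieck's criterion yields sequences $(n_i),(m_j)$ with both iterated $\{0,1\}$-valued limits existing and unequal, and by interchanging the two sequences if necessary we may assume $\lim_i\lim_j\chi_T(n_i+m_j)=1$. Then for each $i$ there exists $J_i$ with $m_j\in T-n_i$ whenever $j\geq J_i$, so for every $k$ the intersection $\bigcap_{i=1}^k(T-n_i)$ contains the infinite set $\{m_j : j\geq\max(J_1,\ldots,J_k)\}$, contradicting the fact that $T$ inherits the TF-property from $S$. For a general bounded $f:\Z_+\to\Cplx$ supported on $S$, I would partition the bounded range of $f$ into finitely many small sets and form the corresponding step function $\sum_{\ell} c_\ell \chi_{T_\ell}$ with each $T_\ell\subseteq S$; this approximates $f$ uniformly, each summand is WAP by the indicator case, and taking uniform limits concludes the argument.

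For the reverse implication, I show that failure of TF in $S$ produces a subset $T\subseteq S$ whose indicator fails to be WAP. Let $(n_i)$ be a sequence with every $I_k:=\bigcap_{i=1}^k(S-n_i)$ infinite. Inductively pick $m_j\in I_j$ with $m_j>m_{j-1}+n_j$; infinitude of $I_j$ makes this possible, and a short calculation (using strict monotonicity of $(n_i)$ together with the growth condition on $(m_j)$) shows that the sums $n_i+m_j$ with $i\leq j$ are pairwise distinct. Put $T:=\{n_i+m_j : i\leq j\}\subseteq S$. Distinctness of the sums gives $\chi_T(n_i+m_j)=1$ when $i\leq j$ and $\chi_T(n_i+m_j)=0$ when $i>j$, so $\lim_i\lim_j\chi_T(n_i+m_j)=1$ while $\lim_j\lim_i\chi_T(n_i+m_j)=0$, and Grothendieck's criterion forces $\chi_T\notin\WAP(\Z_+)$.

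The key technical point is the choice of the subset $T$ in the reverse direction: it will not do simply to take $T=S$, since e.g.\ a cofinite $S$ trivially has $\chi_S\in\WAP$ yet is never TF. The interplay between the TF-failing sequence $(n_i)$ and the rapid-growth choice of $(m_j)$ is exactly what engineers the classical ``upper-triangular'' $\{0,1\}$-pattern that Grothendieck's theorem forbids.
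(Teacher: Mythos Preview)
The paper does not supply its own proof of this theorem: it is quoted from Ruppert's paper \cite{Ruppert_WAPsets} and used as a black box. So there is nothing to compare against, and your attempt must stand or fall on its own.

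Your forward implication is sound. One small point you gloss over: the sequences $(n_i),(m_j)$ produced by Grothendieck's criterion need not be strictly increasing, whereas the definition of TF requires a strictly increasing sequence. This is easily repaired by passing to subsequences and noting that if either sequence were eventually constant the two iterated limits would trivially agree.

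Your reverse implication, however, has a genuine gap. You correctly show that the sums $n_i+m_j$ with $i\le j$ are pairwise distinct, but you then assert that ``distinctness of the sums gives $\chi_T(n_i+m_j)=0$ when $i>j$''. That does not follow: distinctness of the upper-triangular sums says nothing about whether a \emph{lower}-triangular sum $n_i+m_j$ (with $i>j$) might coincide with some upper-triangular sum $n_a+m_b$ (with $a\le b$), and in general it can. Concretely, take $n_i=i$ and suppose $m_2=m_1+3$; this satisfies your growth condition $m_2>m_1+n_2=m_1+2$, yet
\[
n_4+m_1 \;=\; m_1+4 \;=\; m_2+1 \;=\; n_1+m_2 \;\in\; T,
\]
so $\chi_T(n_4+m_1)=1$ even though $4>1$. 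Thus the iterated limit $\lim_j\lim_i\chi_T(n_i+m_j)$ need not be $0$ (indeed it need not even exist) under your hypotheses.

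The standard repair is to interleave the two constructions rather than fix $(n_i)$ in advance. Since any subsequence of a non-TF--witnessing sequence is again such a witness, one may choose $n_1'$, then $m_1$, then $n_2'$ large enough (from the original $(n_i)$), then $m_2$ large enough, and so on, arranging at each stage that \emph{all} sums $n_a'+m_b$ already formed are pairwise distinct; then $n_i'+m_j\in T$ if and only if $i\le j$, and your double-limit argument goes through.
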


It is sometimes convenient to use an alternative phrasing of the original definition (see \cite{Chou_TAMS90} for instance).

\begin{lem}\label{l:alternative-defn}
Let $S\subseteq\Z_+$\/. Then $S$ is non-TF if and only if there are strictly increasing sequences $(a_n)_{n\geq 1},(b_n)_{n\geq 1}\subset \Z_+$ such that $\{a_1,\dots, a_n\}\subseteq S-b_n$ for all~$n$.
\end{lem}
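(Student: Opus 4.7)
The plan is to prove both implications directly by unwinding the definitions; there is no deep content, and the only substantive point is a mild asymmetry between the ``shift'' and ``witness'' roles played by the two sequences.

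For the $(\Leftarrow)$ direction I would assume sequences $(a_n),(b_n)$ as in the statement and claim that $(a_n)$ itself witnesses non-TF-ness of $S$. The condition $\{a_1,\dots,a_n\}\subseteq S-b_n$ says that $a_i+b_n\in S$ for every $i\leq n$, equivalently $b_n\in S-a_i$ for every $i\leq n$. Hence for each fixed $k$ and every $n\geq k$ we have $b_n\in\bigcap_{i=1}^k (S-a_i)$; since $(b_n)$ is strictly increasing this intersection is infinite, so the sequence $(a_i)$ violates the TF condition.

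For the $(\Rightarrow)$ direction, suppose $S$ is non-TF, so that some strictly increasing sequence $(c_n)\subset\Z_+$ has $\bigcap_{i=1}^k(S-c_i)$ infinite for every $k$. I would set $a_n:=c_n$ and construct $(b_n)$ inductively: at stage $n$, since $\bigcap_{i=1}^n(S-a_i)$ is infinite, pick some $b_n$ in it with $b_n>b_{n-1}$. By construction $b_n\in S-a_i$ for every $i\leq n$, i.e.\ $a_i\in S-b_n$ for every $i\leq n$, which is exactly $\{a_1,\dots,a_n\}\subseteq S-b_n$.

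I do not anticipate any genuine obstacle. The only thing to keep straight is the swap highlighted above: in the original definition the ``shifts'' $n_i$ vary while the overlapping witnesses are implicit, whereas in the reformulation the shift $b_n$ is held fixed and the witnesses $a_1,\dots,a_n$ are listed explicitly. Conveniently, in both directions the sequence of shifts for one formulation becomes the sequence of witnesses for the other, so no diagonal argument or extraction of subsequences beyond the single inductive choice above is needed.
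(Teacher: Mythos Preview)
Your proof is correct and essentially identical to the paper's own argument: both directions proceed exactly as you describe, with the $(a_n)$ playing the role of the shifts witnessing non-TF-ness and the $(b_n)$ chosen inductively from the infinite intersections. The only cosmetic difference is that the paper initializes the induction by choosing $b_1\in S-a_1$ explicitly before the recursive step, whereas you fold this into ``stage $n$''; there is no substantive gap.
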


\begin{proof}
Suppose that there exist sequences $(a_n)$\/, $(b_n)$ as described. Then for every $n\geq m\in\Nat$ we have $\{a_1,\ldots, a_m\} \subseteq S-b_n$\/. Hence
\[ \{ b_n \;:\; n\geq m\} \subseteq \bigcap_{k=1}^m (S-a_k) \]
where the set on the left hand side is infinite, for all $m$\/. Thus $S$ is non-TF.

Conversely, suppose $S$ is non-TF: then there is a sequence $a_1<a_2< \ldots$ in $\Z_+$ such that $\bigcap_{j=1}^k (S-a_j)$ is infinite for all $k\in\Nat$\/.
Let $b_1\in S-a_1$\/. We inductively construct a sequence $(b_n)$ as follows: if we have already chosen $b_n$\/, then since $\bigcap_{k=1}^{n+1}(S-a_k)$ is infinite it contains some $b_{n+1}>b_n$\/.
By construction the sequences $(a_n)$ and $(b_n)$ are strictly increasing, and $\{ a_1, \ldots, a_n\} \subseteq S-b_n$ for all $n$\/.
\end{proof}

Our main result, which characterizes weak compactness of $D_\psi$ in terms of $\psi$\/, is as follows.
\begin{thm}\label{t:wc-der}
Let $\psi\in \lp\infty(\Nat)$. Then $D_\psi$ is weakly compact if and only if, for all $\eps>0$\/, the set $S_\eps:=\{n\in\Nat:\abs{\psi_n}>\eps\}$ is TF.
\end{thm}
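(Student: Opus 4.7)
The plan is to use Lemma~\ref{l:checking-wc} to reduce weak compactness of $D_\psi$ to a pointwise-convergence condition on subsequences of $(D_\psi(\delta_n))_{n\in\Nat}$, and then to match this condition against the two-sequence description of non-TF-ness in Lemma~\ref{l:alternative-defn}. The key observation, coming straight from \eqref{eq:main_formula}, is that $D_\psi(\delta_j)(\delta_k) = \tfrac{j}{j+k}\psi_{j+k}$ with $\tfrac{j}{j+k}\to 1$ as $j\to\infty$ and $k$ fixed; so a pointwise limit of $(D_\psi(\delta_{j_n}))_n$ on $\Z_+$, whenever one exists, equals $(\lim_n \psi_{j_n+k})_{k\in\Z_+}$.

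For the forward direction I argue by contrapositive. If some $S_\eps$ fails to be TF, Lemma~\ref{l:alternative-defn} furnishes strictly increasing sequences $(a_i),(b_n)\subset\Z_+$ with $|\psi_{a_i+b_n}|>\eps$ whenever $i\leq n$. Consider any subsequence $(b_{n_j})_j$ along which $D_\psi(\delta_{b_{n_j}})$ converges pointwise to some $y\in c_0$: for each fixed $i$ and all $j$ large enough that $n_j\geq i$,
\[
|D_\psi(\delta_{b_{n_j}})(\delta_{a_i})| = \tfrac{b_{n_j}}{b_{n_j}+a_i}|\psi_{a_i+b_{n_j}}| > \tfrac{b_{n_j}}{b_{n_j}+a_i}\eps,
\]
giving $|y_{a_i}|\geq\eps$ in the limit $j\to\infty$, which contradicts $y\in c_0$ since $a_i\to\infty$. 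By Lemma~\ref{l:checking-wc}, $D_\psi$ is not weakly compact.

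For the converse, again by contrapositive, assume $D_\psi$ is not weakly compact; Lemma~\ref{l:checking-wc} yields a subsequence $(D_\psi(\delta_{j_n}))_n$ no further subsequence of which converges pointwise to an element of $c_0$. Since $\psi$ is bounded, a standard diagonal extraction produces a subsequence $(j_{n_m})_m$ with $\psi_{j_{n_m}+k}\to y_k$ for every $k\in\Z_+$; by the observation above, $D_\psi(\delta_{j_{n_m}})\to y$ pointwise, so by hypothesis $y\notin c_0$. Choose $\eps>0$ and a strictly increasing $(a_i)\subset\Z_+$ with $|y_{a_i}|>\eps$ for all $i$, and then inductively pick $m_1<m_2<\cdots$ such that $|\psi_{a_i + j_{n_{m_k}}}|>\eps$ for all $i\leq k$---possible because for each fixed $i$, $\psi_{j_{n_m}+a_i}\to y_{a_i}$ as $m\to\infty$. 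Setting $\beta_k := j_{n_{m_k}}$ gives $\{a_1,\dots,a_k\}\subseteq S_\eps-\beta_k$ for every $k$, so Lemma~\ref{l:alternative-defn} shows that $S_\eps$ is not TF.

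The only real obstacle is the bookkeeping in the nested subsequence argument of the converse; conceptually both directions are driven by the criterion in Lemma~\ref{l:checking-wc} together with the weight $\tfrac{j}{j+k}\to 1$, which makes pointwise convergence of $(D_\psi(\delta_{j_n}))_n$ equivalent to convergence of the shifted sequences $(\psi_{j_n+k})_n$ for every $k$---exactly matching the two-sequence structure of Lemma~\ref{l:alternative-defn}.
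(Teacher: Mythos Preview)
Your argument is correct in both directions. The forward (necessity) half is essentially the paper's own proof: the paper isolates it as Proposition~\ref{p:nec-cond}, working with the same sequences $(a_i),(b_n)$ from Lemma~\ref{l:alternative-defn} and the same lower bound $\tfrac{b_n}{b_n+a_i}\,\eps$ to force any pointwise limit out of $c_0$.

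The converse (sufficiency) half is where you diverge. The paper proves it \emph{directly} (Proposition~\ref{p:wc-gen-op}): it first reduces by approximation to $\psi$ supported on a single TF set $S$, and then, given an arbitrary subsequence $(j_n)$, runs a recursive construction to show that only finitely many coordinates $k$ can satisfy ``$j_n+k\in S$ for infinitely many $n$''; Heine--Borel on that finite coordinate set then yields a further subsequence whose pointwise limit has finite support, hence lies in $c_0$. You instead argue by contrapositive: diagonalise to force a pointwise limit $y$ in $\ell^\infty$, observe that failure of weak compactness forces $y\notin c_0$, and read off a witness to non-TF-ness of some $S_\eps$ from the coordinates where $|y|>\eps$. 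This is shorter and avoids both the $\psi\mapsto\psi_\eps$ approximation and the recursive bookkeeping. The trade-off is generality: the paper's direct argument for sufficiency works for any operator $T^M_\psi$ with $\lim_k M_{jk}=0$ (only the row-decay of $M$ is used), whereas your contrapositive relies on $\tfrac{j}{j+k}\to 1$ as $j\to\infty$, i.e.\ on the column behaviour of $M$ that the paper invokes only for the necessity half.
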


It is not clear to the authors if one can deduce Theorem~\ref{t:wc-der} in a ``soft'' way from Ruppert's characterization (Theorem~\ref{t:Ruppert}). Instead, we give a direct argument.  The proof naturally breaks into two parts, both of which can be carried out in some generality.

Given $\psi\in\lp{\infty}(\Z_+)$ and $M\in\lp{\infty}(\Z_+\times\Z_+)$\/, define $T^M_\psi: \lp{1}(\Z_+)\to\lp{\infty}(\Z_+)$ by
\begin{equation}
T^M_\psi(\delta_j)(\delta_k) = M_{jk} \psi_{j+k}\qquad(j,k\in\Z_+).
\end{equation}

In particular, if we take
 $\psi\in\ell^\infty(\Nat)$, identified with $(0,\psi_1,\psi_2,\dots)\in\ell^\infty(\Z_+)$, and take
 $M_{jk}$ to be $0$ for $j=k=0$ and to be $j/(j+k)$ otherwise, then $T^M_\psi\equiv D_\psi$\/.

\begin{prop}\label{p:wc-gen-op}
Let $\psi\in\lp{\infty}(\Z_+)$ be such that $S_\eps$ is TF for all $\eps>0$\/.
If $M\in\lp{\infty}(\Z_+\times\Z_+)$ is such that
$\lim_{k\to\infty} M_{jk}=0$ for all~$j$\/,
then $T^M_\psi$ is weakly compact.
\end{prop}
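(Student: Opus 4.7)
The plan is to apply Lemma~\ref{l:checking-wc} to $T = T^M_\psi$. First I would verify the standing hypothesis of that lemma, namely that $T$ actually takes values in $c_0(\Z_+)$: the condition $\lim_{k\to\infty}M_{jk}=0$ gives $T^M_\psi(\delta_j)\in c_0(\Z_+)$ for every $j\in\Z_+$, and since $T^M_\psi$ is bounded (as $M,\psi\in\lp{\infty}$) and $c_0(\Z_+)$ is closed in $\lp{\infty}(\Z_+)$, this extends by density to all of $\lp{1}(\Z_+)$.

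Given any subsequence of $(T^M_\psi(\delta_n))_{n\in\Nat}$ indexed by $n_1<n_2<\cdots$, the scalars $M_{n_i,k}\psi_{n_i+k}$ are bounded uniformly in $i,k$ by $\norm{M}_\infty\norm{\psi}_\infty$, so a standard diagonal extraction produces a further subsequence (still denoted $(n_i)$) along which $y_k:=\lim_{i\to\infty} M_{n_i,k}\psi_{n_i+k}$ exists for every $k\in\Z_+$. By Lemma~\ref{l:checking-wc} it then suffices to show $y\in c_0(\Z_+)$.

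This last step is the substantive one, and the only place where the TF hypothesis enters. I would argue by contradiction: if $y\notin c_0(\Z_+)$, pick $\eps>0$ and an infinite set $K=\{k_1<k_2<\cdots\}\subseteq\Z_+$ with $\abs{y_{k_j}}\geq\eps$ for every $j$. Set $\eta:=\eps/(2\norm{M}_\infty)$ (the case $M\equiv 0$ being trivial). Then the convergence $M_{n_i,k_j}\psi_{n_i+k_j}\to y_{k_j}$ forces $\abs{\psi_{n_i+k_j}}\geq\eta$, i.e.\ $n_i\in S_\eta-k_j$, for all sufficiently large $i$. One can therefore choose indices $i_1<i_2<\cdots$ such that $b_j:=n_{i_j}$ is strictly increasing in $j$ and lies in $\bigcap_{l=1}^{j}(S_\eta-k_l)$. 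Setting $a_j:=k_j$, the sequences $(a_j)$ and $(b_j)$ are strictly increasing with $\{a_1,\ldots,a_j\}\subseteq S_\eta-b_j$ for every $j$; by Lemma~\ref{l:alternative-defn} this makes $S_\eta$ non-TF, contradicting the hypothesis on $\psi$. Hence $y\in c_0(\Z_+)$, and Lemma~\ref{l:checking-wc} yields weak compactness of $T^M_\psi$.
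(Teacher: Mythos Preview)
Your argument is correct and complete. The approach, however, differs from the paper's in a genuine way.

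The paper first reduces, by an approximation argument ($T_{\psi_\eps}\to T_\psi$ in operator norm), to the case where $\psi$ is supported on a single TF set~$S$. It then runs a constructive recursion: starting from an arbitrary subsequence, it peels off indices $k_1,k_2,\ldots$ along which one can pass to nested subsequences hitting $S$, and uses the TF hypothesis directly (via the original definition) to force this process to terminate after finitely many steps. The upshot is an explicit finite set $K$ such that the pointwise limit is supported on~$K$; Heine--Borel then handles convergence on~$K$.

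You instead skip the reduction entirely, use a standard diagonal extraction to secure pointwise convergence to some $y$, and then argue by contradiction that $y\in c_0$, invoking the TF hypothesis through Lemma~\ref{l:alternative-defn} rather than the original definition. This is shorter and more in the spirit of classical weak-compactness arguments; the paper's route is more combinatorial and yields the extra information that the limit is finitely supported, though that is not needed for the proposition as stated.
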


\begin{proof}
To ease notation we write $T_\psi$ for $T^M_\psi$ throughout this proof. Note that the condition on $M$ implies that $T_\psi$ takes values in $c_0(\Z_+)$\/.

Define $\psi_\eps\in\lp\infty(\Z_+)$ as follows:
set $(\psi_\eps)_n$ to be $\psi_n$ if $n\in S_\eps$ and $0$ otherwise.
Then $\psi_\eps$ is supported on $S_\eps$ and $T_{\psi_\eps}\rightarrow T_\psi$ as $\eps\rightarrow 0$.
It therefore suffices to prove that if $\psi$ has TF support, then  $T_\psi$ is weakly compact. 

Let $\psi\in\lp\infty$ be supported on a TF set $S$. Let $(j_n)_{n\geq 1}\subset \Z_+$ be a strictly increasing sequence and set $(j_{0,n})=(j_n)$, $k_0=0$. For each $i\ge 1$ we specify an integer $k_i\in\Z_+$ and a sequence  $(j_{i,n})_{n\geq 1}\subset \Z_+$ recursively, as follows.
If there exists $k\in \Z_+\setminus\{k_0,\dots, k_{i-1}\}$ such that $j_{i-1,n}+k\in S$ for infinitely many $n\in\Nat$\/, let $k_i$ be some such $k$. Otherwise, let $k_i=k_{i-1}$. Let $(j_{i,n})_{n\geq 1}$ be the enumeration of the set 
\[
\{j_{i-1,n}:n\in\Nat, j_{i-1,n}+k_i\in S \}
\] 
with $j_{i,n} <j_{i,n+1}$ for each $n\in\Nat$. Then, by induction on $i$, $(j_{i,n})_n$ is a subsequence of $(j_n)_n$ and, for each $l\in\{1,\dots, i\}$, and each $n\in\Nat$, we have $j_{i,n}+k_l\in S$.

In particular, for each $i\in\Nat$, $\{j_{i,i}+k_1,\dots, j_{i,i}+k_i\}\subset S$\/.
Hence, by our assumption that $S$ is TF, the set $\{k_i:i\in\Z_+\}$ is finite. Let $i_0$ be the smallest $i$ for which $k_i=k_{i+1}$\/: then
for each $k\in \Z_+\setminus \{k_0,\ldots, k_{i_0}\}$\/, there are only finitely many $n\in\Nat$ such that $j_{i_0,n}+k\in S$\/.

Let $K=\{k_0,\ldots, k_{i_0}\}$.
By the \named{Heine-Borel theorem}, there exists a subsequence $(j_{n(m)})_m$ of $(j_{i_0,n})_n$ such that, for each $k\in K$, $\lim_{m\to\infty} T_\psi(\delta_{j_{n(m)}})(\delta_k)$ exists.
Moreover, by the previous paragraph, for each $k\in \Z_+\setminus K$ there exist at most finitely many $m$ such that $j_{n(m)}+k\in S$\/; hence there exists $m(k)$ such that 
\[ T_\psi(\delta_{j_{n(m)}})(\delta_k)= M_{j_{n(m)},k}\psi(j_{n(m)}+k)=0 \quad\text{ for all $m\geq m(k)$\/.} \]
Thus $T_\psi(\delta_{j_{n(m)}})$ converges pointwise to some function supported on $K$\/,
 and the result follows by Lemma~\ref{l:checking-wc}.
\end{proof}

\begin{prop}\label{p:nec-cond}
Let $M\in\lp{\infty}(\Z_+\times\Z_+)$ satisfy
\begin{equation}\label{eq:silly-condition}
\lim_{k\to\infty} M_{jk}=0 \text{ for all $j$\/, and }
   \inf_k \liminf_{j\to\infty} \abs{M_{jk}} = \eta > 0\,.
\end{equation}
Let $\psi\in \lp{\infty}$\/, and suppose that $T^M_\psi$ is weakly compact. Then $S_\eps$ is TF for all $\eps>0$\/.
\end{prop}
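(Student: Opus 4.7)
My plan is to argue the contrapositive: assuming that $S_\eps$ fails to be TF for some $\eps > 0$, produce a subsequence of $(T^M_\psi(\delta_n))_n$ admitting no pointwise-convergent subsequence with limit in $c_0(\Z_+)$, and invoke Lemma~\ref{l:checking-wc} to conclude that $T^M_\psi$ is not weakly compact. The candidate subsequence is furnished directly by Lemma~\ref{l:alternative-defn}: pick strictly increasing $(a_n), (b_n) \subset \Z_+$ with $\{a_1,\dots, a_n\} \subseteq S_\eps - b_n$ for every $n$, i.e.\ $|\psi_{a_i+b_n}| > \eps$ whenever $i \leq n$.

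Suppose for contradiction that some subsequence $(T^M_\psi(\delta_{b_{n_m}}))_m$ converges pointwise to $y \in c_0(\Z_+)$. Fix $i$. For every $m$ with $n_m \geq i$ we have $a_i + b_{n_m} \in S_\eps$, so
\[
\abs{T^M_\psi(\delta_{b_{n_m}})(\delta_{a_i})} \;=\; \abs{M_{b_{n_m},a_i}} \,\abs{\psi_{a_i+b_{n_m}}} \;>\; \eps\,\abs{M_{b_{n_m},a_i}}.
\]
Since $b_{n_m} \to \infty$, the hypothesis \eqref{eq:silly-condition} gives $\liminf_m \abs{M_{b_{n_m},a_i}} \geq \liminf_{j\to\infty}\abs{M_{j,a_i}} \geq \eta$. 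Passing to the limit in $m$ therefore yields $\abs{y(a_i)} \geq \eps\eta$ for every $i$.

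But $a_i \to \infty$ and $y \in c_0(\Z_+)$, forcing $y(a_i) \to 0$; this contradicts $\eps\eta > 0$. Hence no subsequence of $(T^M_\psi(\delta_{b_n}))_n$ converges pointwise to an element of $c_0(\Z_+)$, so by Lemma~\ref{l:checking-wc} the operator $T^M_\psi$ is not weakly compact, establishing the contrapositive.

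I expect no serious obstacles: the argument is essentially a diagonal-style bookkeeping, and the only delicate point is making sure the two conditions in \eqref{eq:silly-condition} are used correctly, namely that the first condition ensures $T^M_\psi$ lands in $c_0(\Z_+)$ (so Lemma~\ref{l:checking-wc} applies), while the uniform positive $\liminf$ in the second condition is precisely what lets one lift the $\eps$-lower bound on $\psi$ to a positive lower bound on the matrix entries $M_{b_{n_m},a_i}\psi_{a_i+b_{n_m}}$ along the chosen sequence.
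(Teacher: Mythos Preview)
Your proof is correct and follows essentially the same route as the paper's: both argue by contradiction via Lemma~\ref{l:alternative-defn} to produce sequences $(a_n),(b_n)$, then use Lemma~\ref{l:checking-wc} together with the lower bound $\inf_k\liminf_j\abs{M_{jk}}\geq\eta$ to show that any pointwise limit of $(T^M_\psi(\delta_{b_n}))$ would satisfy $\abs{y(a_i)}\geq\eps\eta$ for all~$i$, contradicting membership in~$c_0$. The paper also notes explicitly at the outset that the first condition in~\eqref{eq:silly-condition} forces the range of $T^M_\psi$ into $c_0(\Z_+)$, which you correctly flag as the reason Lemma~\ref{l:checking-wc} applies.
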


\begin{proof}
We first note that \eqref{eq:silly-condition} implies that $T^M_\psi$ has range contained in $c_0(\Z_+)$\/.
Suppose the result is false: then there exists $\eps>0$ such that $S_\eps$ is non-TF.
Hence, by Lemma~\ref{l:alternative-defn}, there exist strictly increasing sequences $(a_n),(b_n)\subset \Z_+$ such that
\[ \{a_1+b_n,\dots, a_n+b_n\}\subset S_\eps\quad\text{ for all~$n$\/.} \]
Now
\[
\abs{T^M_\psi(\delta_{b_n})(\delta_{a_m})}=\abs{M_{b_n,a_m}} \abs{\psi_{a_m+b_n}} \geq \abs{M_{b_n,a_m}}\eps
\quad\text{ for all $m\leq n$\/;}
\]
so, by the hypothesis \eqref{eq:silly-condition}, we have
\begin{equation}\label{eq:ORINOCO}
 \inf_m \liminf_n \abs{T^M_\psi(\delta_{b_n})(\delta_{a_m})} \geq \eta\eps\,. 
\end{equation}

Since $T^M_\psi$ is weakly compact, by Lemma~\ref{l:checking-wc} the sequence $(T_\psi(\delta_{b_n}))$ has a subsequence that converges pointwise to some $\phi\in c_0(\Z_+)$\/. But by \eqref{eq:ORINOCO} we must have $\inf_m \abs{\phi_{a_m}} \geq\eta\eps$\/, so that 
$\phi\notin c_0(\Z_+)$. Hence we have a contradiction and the proof is complete.
\end{proof}

\begin{proofof}{Theorem~\ref{t:wc-der}}
Sufficiency of the stated condition follows from Proposition~\ref{p:wc-gen-op}; necessity, from Proposition~\ref{p:nec-cond}, once we observe that
$\lim_k j/(j+k)=0$ for all~$j$\/, and $\lim_j j/(j+k)=1$ for all~$k$\/. 
\end{proofof}

\begin{rem}
The set $S= \{ 2^n \;:\; n\in\Nat\}$ is TF. In fact, it is not hard to show it has the following stronger property:
\begin{equation}\label{eq:defn-of-T-set}
\text{for \emph{every} $n\in\Nat$\/, the set $S\cap (S-n)$ is finite or empty.} \tag{$\dagger$} 
\end{equation}
We therefore obtain another proof that the derivation constructed in Proposition~\ref{p:first-example} is weakly compact.

Subsets of $\Z_+$ satisfying the condition \eqref{eq:defn-of-T-set} seem not to have an agreed name. They were called \dt{$T$-sets} in work of Ramirez~\cite{Ram_uaFS}, and for ease of reference we shall use his terminology. 
\end{rem}

\section{Comparing the TF-property with other notions of size}
Let $S\subset \Z_+$. For $n\in\Nat$ we define $f_S(n)$ to be the $n$th member of $S$.

\begin{prop}\label{p:clumpy}
Let $S\subset \Z_+$\/. Then there exists a non-TF $R\subset \Z_+$ with $f_R(n)>f_S(n)$ for all $n$.
\end{prop}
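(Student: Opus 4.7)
The plan is to invoke Lemma~\ref{l:alternative-defn} in reverse: I construct $R$ explicitly as a disjoint union of ``triangular'' blocks $B_n := \{a_1+b_n, a_2+b_n, \ldots, a_n+b_n\}$ for strictly increasing sequences $(a_i)_{i\geq 1}$ and $(b_n)_{n\geq 1}$ in $\Z_+$ to be chosen, and then arrange the sparseness of the $b_n$ so that $R := \bigcup_n B_n$ has enumeration majorising that of $S$. Write $s_j := f_S(j)$ and $N_n := n(n+1)/2$, so $N_n$ is the cumulative block-size $|B_1|+\cdots+|B_n|$.

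Concretely, I set $a_i := i$ and pick $b_n$ inductively to satisfy two demands: (a)~$b_n > b_{n-1} + n - 1$, ensuring $\min B_n = 1 + b_n > (n-1) + b_{n-1} = \max B_{n-1}$, so the blocks are pairwise disjoint and lined up in increasing order within $R$; and (b)~$b_n \geq s_{N_n} + 1$. Both demands are met by, for instance, $b_1 := s_1 + 1$ and $b_n := \max(b_{n-1} + n,\, s_{N_n} + 1)$ for $n\geq 2$ (assuming $S$ is infinite, which is the only non-trivial case).

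With these choices, the elements of $R$ enumerated in increasing order are $f_R(N_{n-1}+k) = k + b_n$ for $1\leq k\leq n$, and combining (b) with the monotonicity of $f_S$ yields $k+b_n \geq s_{N_n}+2 > s_{N_{n-1}+k}$; hence $f_R(j) > f_S(j)$ at every $j$. Both $(a_i)$ and $(b_n)$ are strictly increasing, and $\{a_1,\ldots,a_n\}\subseteq R-b_n$ holds by the very definition of $R$, so Lemma~\ref{l:alternative-defn} certifies that $R$ is non-TF. No serious obstacle arises; the one point requiring care is the arithmetic of the block enumeration, since $B_n$ contributes $n$ new elements and thus the $j$-th smallest member of $R$ comes from a block of index roughly $\sqrt{2j}$---this is why (b) is framed in terms of $s_{N_n}$ rather than $s_n$.
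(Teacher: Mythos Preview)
Your proof is correct and essentially matches the paper's own argument: both build $R$ as a union of consecutive-integer blocks of lengths $1,2,3,\ldots$ (indexed via triangular numbers), with the $n$th block anchored just past $s_{N_n}$ so that the enumeration of $R$ majorises that of~$S$, and both exploit that such a block structure makes $\{1,\ldots,n\}\subseteq R-b_n$ for all~$n$. The only cosmetic differences are that the paper takes $b_n=s_{N_n}$ explicitly (rather than your recursive $\max$) and verifies non-TF directly from the definition rather than via Lemma~\ref{l:alternative-defn}.
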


\begin{proof}
For $n\in \Z_+$, let $t_n = \frac{1}{2}n(n+1)$ be the $n$th triangular number, so that $t_n=t_{n-1}+n$ for all $n\in\Nat$\/. 
Each $n\in\Nat$ has a unique representation as $n=t_{k-1}+j$ where $k\geq 1$ and $1\leq j\leq k$\/.
Enumerate the elements of $S$ in increasing order as $s_1<s_2<s_3<\ldots$\/, and define a sequence $(r_n)_{n\geq 1}$ by
\[ r_{t_{k-1}+j} = s_{t_k}+j \qquad{(1\leq j\leq k)} \]
as indicated by the following diagram:
\[ \begin{array}{cccc}
 r_1=s_1+1   \\
 r_2=s_3+1 & r_3=s_3+2   \\
 r_4=s_6+1 & r_5=s_6+2 & r_6=s_6+3   \\
 \vdots & & & 
\end{array} \]
Since the sequence $(s_n)$ is strictly increasing,
\[ s_{t_k} \geq s_{t_{k-1}} + (t_k-t_{k-1}) = s_{t_{k-1}}+k \qquad(k\in\Nat), \]
and so the sequence $(r_n)$ is strictly increasing. Put $R=\{ r_n \;:\; n\in\Nat\}$\/: then clearly $f_R(n)=r_n > s_n =f_S(n)$ for all $n$\/.
Finally, since $\bigcap_{j=1}^m (R-j) \supseteq \{ s_{t(k)} \;:\; k\geq m \}$ for all~$m$\/, $R$ is not a TF-set.
\end{proof}

On the other hand, we can find T-sets $S$ such that $f_S$ grows at a ``nearly linear'' rate.

\begin{prop}\label{p:slowgrow_T-set}
Let $g:\Nat\rightarrow\Z_+$ be any function such that $g(n)/n \to \infty$\/.
Then there is a strictly increasing sequence $a_1<a_2<\ldots$ in $\Z_+$, such that $\{a_n:n\in\Nat\}$ is a T-set, while $a_n<g(n)$ for all but finitely many $n$.
\end{prop}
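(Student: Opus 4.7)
The plan rests on the observation that any strictly increasing sequence $a_1<a_2<\ldots$ of positive integers whose consecutive gaps $a_n-a_{n-1}$ tend to infinity automatically generates a T-set. Indeed, fix $m\in\Nat$; a contribution to $S\cap (S-m)$ corresponds to a pair $i<j$ with $a_j-a_i=m$, and any such $j$ satisfies $a_j-a_{j-1}\leq a_j-a_i=m$, so $j$ lies in the finite set $\{j:a_j-a_{j-1}\leq m\}$. For each such $j$ strict monotonicity of $(a_n)$ forces $i$ uniquely, giving $|S\cap (S-m)|<\infty$.

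This reduces the proposition to exhibiting strictly positive integers $d_1,d_2,\ldots$ with $d_n\to\infty$ such that $\sum_{k=1}^n d_k<g(n)$ for all but finitely many $n$, and then setting $a_n:=\sum_{k=1}^n d_k$. Writing $h(n):=g(n)/n$, the hypothesis on $g$ is precisely $h(n)\to\infty$. I would take
\[ d_n := \max\left(1,\,\left\lfloor \tfrac{1}{2}\inf_{k\geq n}h(k)\right\rfloor\right). \]
Since $\inf_{k\geq n}h(k)$ is non-decreasing in $n$ and tends to infinity (because $h(k)\to\infty$), the sequence $(d_n)$ is non-decreasing, takes values in $\Nat$, tends to infinity, and satisfies $d_n\leq h(n)/2$ for all sufficiently large~$n$.

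Because $(d_n)$ is non-decreasing, $a_n=\sum_{k=1}^n d_k\leq n d_n\leq nh(n)/2=g(n)/2$, which is strictly less than $g(n)$ for $n$ large enough (certainly once $g(n)\geq 1$, which holds eventually since $g(n)\to\infty$). Meanwhile $a_n-a_{n-1}=d_n\to\infty$, so the initial observation shows $\{a_n:n\in\Nat\}$ is a T-set, completing the proof.

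I do not anticipate any real obstacle here: the content of the proposition lies in noticing that being a T-set is much weaker than a Sidon-type condition requiring distinct pairwise differences, so the superlinear growth of $g$ leaves ample slack for a direct summation construction of the form above.
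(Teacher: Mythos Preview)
Your proof is correct and follows essentially the same approach as the paper: both construct a sequence whose consecutive gaps tend to infinity (which, as you isolate cleanly at the outset, forces the T-set property) while keeping the gaps small enough that $a_n$ eventually stays below $g(n)$. The paper uses piecewise-constant gaps governed by thresholds $k_N=\min\{m:g(n)>Nn\text{ for all }n>m\}$, whereas you take $d_n=\max(1,\lfloor\tfrac12\inf_{k\ge n}h(k)\rfloor)$; these are minor variations on the same idea.
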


\begin{proof}
Let $N\in\Nat$ and set $k_N$ to be the smallest natural number such that $g(n)>Nn$ for all $n>k_N$. We now construct our sequence $(a_n)$ recursively.
Set $a_0=0$ and suppose that $a_0,\ldots, a_n$ have been defined: if $a_n<k_1$ set $a_{n+1}=a_n+1$; otherwise, if $k_{N}\le a_n<k_{N+1}$ for some $N\in\Nat$, set $a_{n+1}=a_n+N$.
Thus, the elements of $[k_N,k_{N+1}]\cap \{a_n:n\in\Nat\}$ are in arithmetic progression with common difference $N$. 

A simple induction gives that if $k_N\le n<k_{N+1}$, then $a_n\le Nn$. Since for $n\ge k_N$ we also have that $g(n)>Nn$, it follows that, for all  $n\ge k_1$, $a_n<g(n)$. 

Finally, let $i,j\in \Nat$. If $a_i-j\in\{a_n:n\in\Nat\}$ it follows that $a_i< k_{j+1}+j$. Thus, $\{a_n:n\in\Nat\}$ is a T-set.
\end{proof}

\begin{rem}  
Given that  infinite arithmetic progressions are the most obvious examples of non-TF sets, it may be worth noting that if $g(n)/n^2\rightarrow0$, the T-set constructed in the proof of Proposition~\ref{p:slowgrow_T-set} contains arbitrarily long arithmetic progressions.
\end{rem}

\begin{asides}
This example interests me, because of the natural question: ``is the set of primes TF?'' Note that this isn't answered by your result later on  -- YC
\end{asides}

\medskip

The previous two results indicate that the growth of a subset in $\Z_+$ tells us nothing, on its own, about whether or not it is~TF. The main result of this section shows that, nevertheless, certain kinds of \emph{density property} are enough to force a set to be non-TF. First we need some definitions.
\begin{defn}
Let $S\subset\Z_+$\/. The \dt{upper Banach density}\footnotemark\ of~$S$\/, denoted by $\Bd(S)$, is
\[\Bd(S):= \lim_{d\to\infty} \max_n d^{-1} \abs{ S\cap\{n+1,\ldots, n+d\} } \]
(The limit always exists, by a subadditivity argument.)
\begin{asides}
There must surely be a textbook reference to show the limit exists? -- YC
\end{asides}
\end{defn}

\footnotetext{What we call ``Banach density'' is also referred to as \dt{upper Banach density}, and is in older sources given a slightly different but equivalent definition. Some background and remarks on the literature can be found in \cite[\S1]{Hind_lld}, for example.}

For example, the set $R$ constructed in the proof of Proposition~\ref{p:clumpy}
 satisfies
$R \supseteq \{s_{t(m)}+1,\ldots, s_{t(m)}+m\}$ for all $m$,
and so has a Banach density of~$1$.

\begin{prop}\label{p:BD}
Let $S\subset \Z^+$ and suppose $\Bd(S)>0$\/. Then $S$ is not~TF.
\end{prop}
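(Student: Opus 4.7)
By Lemma~\ref{l:alternative-defn}, to show $S$ is not TF it suffices to produce strictly increasing sequences $(a_n)_{n\geq 1}$ and $(b_n)_{n\geq 1}$ in $\Z_+$ with $\{a_1,\dots,a_n\} \subseteq S - b_n$ for every $n$. My plan is to build $(a_n)$ by induction so that each intersection $T_n := \bigcap_{i=1}^n (S - a_i)$ has positive upper Banach density --- in particular is infinite --- and then pick the $b_n$'s greedily from the $T_n$'s with $b_n > b_{n-1}$.

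The induction hinges on the following recurrence-style claim:

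\smallskip
\noindent\textbf{Key Lemma.} \emph{If $T \subseteq \Z_+$ has $\Bd(T) > 0$, then there exist arbitrarily large $d \in \Nat$ with $\Bd(T \cap (T-d)) > 0$.}
\smallskip

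Granting this, suppose inductively that $\Bd(T_{n-1}) > 0$ (for $n = 1$ take any $a_1 \geq 1$, giving $\Bd(T_1) = \Bd(S) > 0$). For $n \geq 2$ we have $T_{n-1} \subseteq S - a_1$, so $T_{n-1} + a_1 \subseteq S$ and therefore
\[
T_{n-1} \cap (T_{n-1} - d) \;\subseteq\; T_{n-1} \cap \bigl( S - (a_1 + d) \bigr) \qquad (d \geq 1).
\]
Apply the Key Lemma to $T_{n-1}$ to choose $d$ so large that $a_n := a_1 + d > a_{n-1}$ and $\Bd(T_{n-1} \cap (T_{n-1} - d)) > 0$; then $\Bd(T_n) > 0$, continuing the induction.

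The main obstacle is the Key Lemma, where the Ramsey-theoretic flavour appears. The natural approach is a double-counting/pigeonhole argument: setting $\tau := \Bd(T)$, pick long intervals $I_L = [N_L, N_L + L]$ with $|T \cap I_L| \geq (\tau/2) L$. There are at least $\tau^2 L^2 / 8$ ordered pairs $(t, t') \in (T \cap I_L)^2$ with $t < t'$, and each contributes to $|T \cap (T - d) \cap I_L|$ for some $d \in \{1, \dots, L\}$. Averaging supplies, at every scale $L$, a set $D_L \subseteq \{1, \dots, L\}$ of size at least $c \tau^2 L$ such that $|T \cap (T-d) \cap I_L| \geq c \tau^2 L$ for every $d \in D_L$ (where $c > 0$ is absolute). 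The delicate step --- and where I expect the real work to lie --- is to promote these ``single-scale'' estimates into positive upper Banach density of $T \cap (T-d)$ for an unbounded set of $d$'s; this may be achieved either by a careful diagonal argument exploiting the non-increasing nature of $L \mapsto \sup_N L^{-1} |X \cap [N, N+L]|$ (so that the $D_L$'s are nested decreasing) or, more cleanly, via the Furstenberg correspondence principle combined with Khintchine's recurrence theorem applied to the associated measure-preserving system.
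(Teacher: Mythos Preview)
Your reduction to the ``Key Lemma'' (a Poincar\'e-type recurrence statement for upper Banach density) followed by a clean induction is a genuinely different route from the paper's, and the overall architecture is sound: once the Key Lemma is granted, the inductive construction of the $a_n$ and the greedy choice of the $b_n$ are correct. The Key Lemma itself is true, and your second suggested justification --- Furstenberg's correspondence principle together with Khintchine (indeed Poincar\'e suffices) recurrence --- is the standard way to prove it. Your first suggestion, however, does not work as written: the map $L\mapsto \sup_N L^{-1}\abs{X\cap[N,N+L]}$ is \emph{not} non-increasing (subadditivity of the numerator is what makes the limit defining $\Bd$ exist, but that is weaker), and even if it were, the sets $D_L$ are defined relative to particular witnessing intervals $I_L$ that move with $L$, so there is no reason for them to be nested. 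Promoting the single-scale pigeonhole bound to positive Banach density for a \emph{fixed} $d$ genuinely seems to require a compactness or correspondence-principle argument; the naive diagonalisation does not close.

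The paper's proof, by contrast, is entirely elementary. Instead of tracking the Banach density of the shrinking intersections $T_n$, it works with finite ``patterns'' $X\subset\Nat$ that are \emph{recurrent} in $S$ (i.e.\ $X+n\subset S$ for infinitely many $n$). Two pigeonhole lemmas show that such patterns exist with cardinality $\geq d\eps$ at every scale $d$, and that any large recurrent pattern contains a strictly smaller recurrent sub-pattern of controlled size; K\H{o}nig's infinity lemma is then invoked to thread these into a strictly increasing chain $X_1\subsetneq X_2\subsetneq\cdots$ of recurrent sets, from which non-TF follows via Lemma~\ref{l:alternative-defn}. So the trade-off is: your argument is conceptually cleaner but imports ergodic-theoretic machinery for its key step, while the paper's argument is longer and more hands-on but stays within finite combinatorics and graph theory.
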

The converse clearly fails: for example, the set $S=\{2^i+j^2:i\in\Nat, j\in\{0,\dots, i\}\}$ is not~TF, but has Banach density zero.

The proof of Proposition~\ref{p:BD} builds on some preliminary lemmas, which in turn require some notation.
Fix once and for all a set $S\subset\Z_+$ with strictly positive
Banach density, and choose $\eps\in(0,1)$ such that $\Bd(S)>\eps$\/.

For shorthand, we say that a subset $X\subseteq \Z_+$ is \dt{recurrent in $S$} if there are infinitely many $n\in\Nat$ such that $X+n\subset S$\/.
For each $d\in\Nat$, let
\[ \cV_d=\{ X\subset\Nat \;:\; \text{$X$ is recurrent in $S$ and } d\geq\abs{X}\geq d\eps\}\/.\]

\begin{lem}\label{l:non-vacuous}
For every $d\in\Nat$, $\cV_d$ is non-empty.
\end{lem}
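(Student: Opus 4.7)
The plan is a pigeonhole argument on the ``shapes'' of length-$d$ windows in $S$. For each $n\in\Z_+$, I would set $X_n:=(S-n)\cap\{1,\ldots,d\}$, which is a subset of the finite set $\{1,\ldots,d\}$ with $|X_n|=|S\cap\{n+1,\ldots,n+d\}|$. The aim is to produce a single $X\subseteq\{1,\ldots,d\}$ with $d\geq|X|\geq d\eps$ that occurs as $X_n$ for infinitely many $n$. Since $X_n=X$ forces $X+n\subseteq S$, any such $X$ is automatically recurrent in $S$ and therefore lies in $\cV_d$.

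The main step, and the one I expect to be the principal obstacle, is to show that $A:=\{n\in\Z_+:|X_n|>d\eps\}$ is infinite. Fekete's lemma applied to the subadditive sequence $d'\mapsto\sup_n|S\cap\{n+1,\ldots,n+d'\}|$ (the same subadditivity already invoked in the definition of $\Bd(S)$) shows that $\Bd(S)=\inf_{d'}\alpha(d')$, where $\alpha(d'):=\sup_n|S\cap\{n+1,\ldots,n+d'\}|/d'$. Hence $\alpha(d)\geq\Bd(S)>\eps$ for every $d$, and in particular $A$ is non-empty. To upgrade ``non-empty'' to ``infinite'', I would argue by contradiction: if $A$ were bounded above by some $N$, then for any $m\geq N$ the window $\{m+1,\ldots,m+D\}$ can be tiled by $\lceil D/d\rceil$ consecutive blocks of length $d$ whose starting points all lie outside $A$, each contributing at most $d\eps$ elements of $S$. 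This yields a uniform bound $|S\cap\{m+1,\ldots,m+D\}|\leq D\eps+O(N+d)$ for all $m\in\Z_+$ (the $N$ handling the at most $N$ initial terms), and letting $D\to\infty$ forces $\Bd(S)\leq\eps$, contradicting the hypothesis.

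Once $A$ is known to be infinite, a one-line pigeonhole finishes the proof: since $\{1,\ldots,d\}$ has only $2^d$ subsets, the map $n\mapsto X_n$ sends $A$ into a finite set, so some $X\subseteq\{1,\ldots,d\}$ equals $X_n$ for infinitely many $n\in A$. By construction $d\geq|X|>d\eps$ and $X+n\subseteq S$ for each of those $n$'s, so $X$ is recurrent in $S$ and $X\in\cV_d$. Everything apart from the ``one good $n$ to infinitely many'' upgrade is routine bookkeeping.
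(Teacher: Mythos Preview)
Your proposal is correct and follows essentially the same approach as the paper: both arguments first show by a block-tiling contradiction that the set of $n$ with $|S\cap\{n+1,\ldots,n+d\}|\geq d\eps$ (or $>d\eps$) is infinite, and then apply pigeonhole on the finitely many subsets of $\{1,\ldots,d\}$ to extract a recurrent shape $X$. The Fekete aside and the slightly different bookkeeping in your tiling estimate are cosmetic differences only.
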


\begin{proof}
The key step is to prove that the set $\{ i\in\Nat\;:\; \abs{ S\cap\{i+1,\dots, i+d\} }\geq d\eps\}$ is infinite, which we do by contradiction.
For, suppose it is finite, with cardinality~$N$\/, say: then for any $j,n\in\Nat$ we have
\[ \begin{aligned}
(jd)^{-1} \abs{ S\cap\{n+1,\dots, n+(jd)\} }
 & = j^{-1}\sum_{m=0}^{j-1} d^{-1}\abs{ S\cap\{n+md+1,\dots, n+(m+1)d\} }\\
 & \leq  j^{-1} (N+(j-N)\eps)\,,
\end{aligned} \]
so that
\[ \Bd(S)=\lim_j (jd)^{-1} \sup_n \abs{ S\cap\{n+1,\dots, n+(jd)\} }
 \leq \limsup_j j^{-1} (N+(j-N)\eps)=\eps\,, \]
which contradicts our original choice of $\eps$.

It follows that there exists
a strictly increasing sequence $i_1<i_2<\ldots$ in $\Nat$, such that
$\abs{S\cap\{i_n+1,\dots,i_n+d\}}\geq d\eps$ for all $n$\/. Since there are at most finitely many subsets of $\{1,\ldots,d\}$\/, by passing to a subsequence we may assume that the sequence of sets $((S-i_n)\cap\{1,\ldots, d\})_{n\geq 1}$ is constant, with value $X$ say\/. Clearly $X\in \cV_d$, which completes the proof.
\end{proof}

\begin{lem}\label{l:no-orphans}
There exists a sequence $1=d_1<d_2<\ldots$ in $\Nat$ such that, for every $j\in\Nat$ and any $X\in\cV_{d_{j+1}}$\/, there exists $Y\in\cV_{d_j}$ such that
$Y\subseteq X$ and
$\max(Y) < \max(X)$\/.
\end{lem}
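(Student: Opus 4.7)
The plan rests on one simple observation: being recurrent in $S$ is inherited by subsets. Indeed, if $X+n\subseteq S$ for infinitely many $n$, then for any $Y\subseteq X$ we also have $Y+n\subseteq X+n\subseteq S$ for those same $n$. Consequently, given $X\in\cV_{d_{j+1}}$, every subset $Y$ of $X$ is automatically recurrent in~$S$, so membership of $Y$ in $\cV_{d_j}$ reduces to a pure size condition $d_j\eps\leq |Y|\leq d_j$.

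With this in hand, the strategy is to build $Y$ as a suitably-sized subset of $X\setminus\{\max X\}$. Namely, I would choose the $d_j$ so that, for any $X\in\cV_{d_{j+1}}$, the set $X\setminus\{\max X\}$ contains some subset $Y$ with $\lceil d_j\eps\rceil \leq |Y|\leq d_j$. Then $\max Y<\max X$ and $Y$ is recurrent in~$S$ by the observation above, hence $Y\in\cV_{d_j}$. (The upper constraint $\lceil d_j\eps\rceil\leq d_j$ is automatic, since $\eps<1$ and $d_j\in\Nat$.) A subset of the required size exists as soon as $|X|-1\geq\lceil d_j\eps\rceil$, and since $|X|\geq\lceil d_{j+1}\eps\rceil$, it is enough to require
\[
\lceil d_{j+1}\eps\rceil \;\geq\; \lceil d_j\eps\rceil+1.
\]

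To arrange this, set $d_1=1$ and inductively pick $d_{j+1}$ to be any integer with $d_{j+1}>d_j+1/\eps$ (for instance, $d_{j+1}:=d_j+\lceil 1/\eps\rceil+1$). This ensures $d_{j+1}\eps>d_j\eps+1$, so the corresponding ceilings differ by at least~$1$, as needed; the resulting sequence is strictly increasing with $d_1=1$ as required.

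I expect no substantive obstacle: essentially the entire content is the remark that recurrence descends to subsets, after which the statement becomes a matter of choosing the $d_j$ to grow fast enough (roughly linearly with slope $1/\eps$) to accommodate the integer-rounding in the cardinality constraints.
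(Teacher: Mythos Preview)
Your argument is correct and considerably shorter than the paper's. You exploit only the trivial facts that recurrence passes to subsets and that any $(|X|-1)$-element set has a subset of each size up to $|X|-1$; with $d_{j+1}>d_j+1/\eps$ this already forces $|X|-1\geq\lceil d_j\eps\rceil$, and the lemma follows.

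The paper proceeds quite differently. It grows the sequence geometrically, $d_{j+1}=N_j d_j$, then partitions $X\in\cV_{d_{j+1}}$ into $N_j$ consecutive ``windows'' $Y_m=X\cap\{x_0+md_j,\ldots,x_0+(m+1)d_j-1\}$ and uses an averaging argument to show that at least two windows have $\geq d_j\eps$ elements; the leftmost such window is the desired $Y$. Your approach is more elementary and needs only linear growth of the $d_j$. What the paper's approach buys, however, is extra structure: its $Y$ lies in an interval of length $d_j$, i.e.\ $\max Y-\min Y<d_j$. This is not asserted in the lemma as stated, but it is tacitly used in the proof of Proposition~\ref{p:BD}, where one needs to translate the $Y$ produced by the lemma into $\{1,\ldots,d_j\}$ in order to land in the finite vertex set $V_j$. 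The $Y$ produced by your construction can have diameter as large as $d_{j+1}-2$, so while your proof establishes the lemma exactly as written, it would not plug directly into the paper's application without further work.
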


\begin{proof}
Put $d_1=1$ and choose $N_1\in\Nat$ such that $N_1 > \eps^{-1}$.
We then inductively construct our sequence $(d_n)$ as follows: if we have already defined $d_j$ for some $j\in\Nat$, let $a_j$ be the largest non-negative integer such that $a_j<d_j\eps$. 
Then choose $N_j\in\Nat$, $N_j\geq 2$, large enough that
\begin{equation}\label{eq:test} 
\frac{1}{N_j}\left[ (N_j-1)\frac{a_j}{d_j} + 1\right] < \eps\,,
\end{equation}
and set $d_{j+1}=N_jd_j$\/.

To show that this sequence has the required properties, let $j\in\Nat$. Given $X\in \cV_{d_{j+1}}$, put $x_0=\min(X)$, and for $m=0,1,\ldots, N_j-1$ put
\[ Y_m= X\cap \{ x_0+md_j, \ldots, x_0+(m+1)d_j-1 \}.\] 
Since $\abs{X}\leq d_{j+1}$ and $\min(X)=x_0$\/, the sets $Y_0,\dots, Y_{N_j-1}$ form a partition of $X$\/.
Since $X$ is recurrent in $S$, so is each of the subsets $Y_m$, and by construction $\abs{Y_m}\leq d_j$ for all~$m$\/.

We claim that there exist $m(1)<m(2)\in\{0,1,\ldots, N_j-1\}$ such that $Y_{m(1)}$ and $Y_{m(2)}$ have cardinality $\geq d_j\eps$\/. If this is the case then
$Y_{m(1)}\in\cV_{d_j}$ and $\max(Y_{m(1)}) < \min(Y_{m(2)}) \leq \max(X)$\/, so that we may take $Y=Y_{m(1)}$ in the statement of the lemma.

Suppose the claim is false. Then at least $N_j-1$ of the sets $Y_0,\ldots, Y_{N_j-1}$ have cardinality $<d_j \eps$, and hence (by the definition of $a_j$) at least $N_j-1$ of these sets have cardinality~$\leq a_j$.
Now since $X\subseteq \{x_0, \ldots, x_0+d_{j+1}-1\}$, we have $X= Y_0\sqcup \ldots \sqcup Y_{N_j-1}$, and so
\[ N_j d_j\eps = d_{j+1}\eps \leq \abs{X} = \sum_{m=0}^{N_j-1} \abs{Y_m} \leq d_j+(N_j-1)a_j.\]
On dividing through by $N_jd_j$, we obtain a contradiction with \eqref{eq:test}, and our claim is proved.
\end{proof}

The final ingredient in our proof of Proposition~\ref{p:BD} is purely combinatorial: it is a version of `\named{K\H{o}nig's infinity lemma}', which we isolate and state for convenience. We shall paraphrase the formulation given in \cite[Lemma~8.1.2]{Die_graph3rd}, and refer the reader to that text for the proof.

\begin{lem}\label{l:Konig}
Let ${\mathcal G}$ be a graph on a countably infinite vertex set $V$, and let $V=\coprod_{j\geq 1} V_j$ be a partition of $V$ into mutually disjoint, non-empty finite subsets.
Suppose that for each $j\geq 1$, every $v\in V_{j+1}$ has a neighbour in $V_j$. Then there exists a sequence $(v_n)_{n\geq 1}$, with $v_n\in V_n$ for each $n$, such that $v_{n+1}$ is a neighbour of $v_n$.
\end{lem}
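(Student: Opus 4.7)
The plan is to prove this essentially classical result via the standard ``infinite tree with finite levels'' argument.  I would introduce the combinatorial object:
\[
T := \bigl\{ (v_1,\ldots,v_j) : j\geq 1,\ v_i\in V_i \text{ for each } i,\ v_{i+1} \text{ is a neighbour of } v_i \text{ in } \mathcal{G} \bigr\},
\]
i.e.\ all finite ``compatible'' chains that pass through the prescribed levels in order.  The desired sequence $(v_n)_{n\geq 1}$ is precisely an infinite path through $T$ (ordered by the prefix relation).

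My first step would be to verify that $T$ is infinite.  By the hypothesis on $\mathcal{G}$, given any $j$ and any $w\in V_j$, one can trace backwards: $w$ has a neighbour $w_{j-1}\in V_{j-1}$, which in turn has a neighbour $w_{j-2}\in V_{j-2}$, and so on, producing a chain $(w_1,\ldots,w_{j-1},w)\in T$ of length $j$.  Since $V_j\neq\emptyset$ for every $j$, the set $T$ contains chains of every length and is therefore infinite.  On the other hand, for each fixed $j$ the set $T_j$ of chains of length exactly $j$ has at most $|V_1|\cdot|V_2|\cdots|V_j|$ elements, and so is finite.

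Next I would construct $(v_n)$ recursively by a pigeonhole/``fat branch'' argument.  For a chain $c=(v_1,\ldots,v_j)\in T$, write $T(c)$ for the set of chains in $T$ that extend $c$ as a prefix.  Since $T=\bigsqcup_{v\in V_1} T\bigl((v)\bigr)$, $V_1$ is finite and $T$ is infinite, pigeonhole yields some $v_1\in V_1$ with $T\bigl((v_1)\bigr)$ infinite.  Inductively, suppose $v_1,\ldots,v_j$ have been chosen so that $T\bigl((v_1,\ldots,v_j)\bigr)$ is infinite.  Because each such extension of length $\geq j+1$ has a $(j{+}1)$-st entry lying in the finite set $V_{j+1}$ (and must be a neighbour of $v_j$), and because finiteness of each $V_n$ forces infinitely many extensions to have length $\geq j+1$, a second application of pigeonhole produces some $v_{j+1}\in V_{j+1}$, adjacent to $v_j$, such that $T\bigl((v_1,\ldots,v_j,v_{j+1})\bigr)$ is still infinite.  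This sequence $(v_n)$ satisfies the conclusion.

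There is no real obstacle here beyond bookkeeping; the only subtlety is the second pigeonhole, where one must remember that ``infinitely many extensions'' of $(v_1,\ldots,v_j)$ forces infinitely many of length $\geq j+1$ (using that only finitely many chains of each length exist), so that the finite fan-out at $V_{j+1}$ cannot exhaust all of them.  If one prefers to avoid even this, one can short-circuit the proof by invoking the classical form of König's lemma applied to the tree $T$ with root ``the empty chain''; since every level of $T$ is finite and $T$ is infinite, $T$ has an infinite branch, which is exactly the desired sequence.
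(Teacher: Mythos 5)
Your proof is correct. The paper does not actually prove this lemma---it simply cites Diestel's formulation of K\H{o}nig's infinity lemma (\cite[Lemma~8.1.2]{Die_graph3rd}) and refers the reader there---and your tree-of-chains construction with the two pigeonhole steps is exactly the standard argument behind that citation, with the one genuine subtlety (that infinitely many extensions of a ``fat'' prefix must have length at least $j+1$, so the finite fan-out into $V_{j+1}$ cannot absorb them all) handled correctly.
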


\begin{proofof}{Proposition~\ref{p:BD}}
Let $(d_j)$ be the sequence from Lemma~\ref{l:no-orphans}. For each $j$\/, let $V_j$ be the set of all subsets of $\{1,\dots, d_j\}$ which are also members of $\cV_{d_j}$\/. The proof of Lemma~\ref{l:non-vacuous} shows that $V_j$ is non-empty, and clearly it is a finite set.

Regard $\coprod_{j\geq 1} V_j$ as the vertex set for a graph, whose edges are defined by the following rule: for each $j\in\Nat$ and $Y\in V_j$\/, $X\in V_{j+1}$, join $X$ to $Y$ with an edge if and only if there exists $m\in\Z_+$ with $Y+m\subseteq X$ and $\max(Y)+m<\max(X)$\/.
Then by Lemma~\ref{l:no-orphans}, every element of $V_{j+1}$ has a neighbour in $V_j$. Hence, by Lemma~\ref{l:Konig}, there exists a sequence $(Y_j)_{j\geq 1}$ of finite subsets of $\Nat$\/, and a sequence $(m_j)\subset \Z_+$\/, such that
\begin{YCnum}
\item $Y_j\in V_j$ for all $j$\/;
\item $Y_j+m_j\subseteq Y_{j+1}$ and $\max(Y_j)+m_j<\max(Y_{j+1})$ for all $j$\/.
\end{YCnum}
Now put $X_1=Y_1$ and put $X_{j+1}= Y_{j+1}-(m_j+\dots +m_1)\subset\Nat$ for $j\geq 1$\/. An easy induction using \emph{both parts} of (ii) above shows that $X_j\subsetneq X_{j+1}$ for all $j$\/.
Since each $Y_i$ is recurrent in $S$\/, so is each $X_i$\/, 
and hence there exist infinitely many $n$ such that $X_i+n\subset S$\/. We may therefore inductively construct $n_1<n_2<\ldots$ such that $X_i+n_i\subset S$ for all $i$\/.

Pick $c_1\in X_1$ and for each $i$ pick $c_{i+1}\in X_{i+1}\setminus X_i$\/; then for all $1\leq i\leq j$ we have $c_i+n_j \in X_j+n_j \subset S$\/;
and since the set $\{ c_i \;:\; i\in\Nat\}$ is infinite, by Lemma~\ref{l:alternative-defn} $S$ is not~TF.
\end{proofof}

\begin{asides}
Note that in using Lemma~\ref{l:no-orphans}, we don't actually need to have lower bounds on the cardinality of the sets $X_i$\/: we only need to know that they form a strictly increasing chain~{--}~YC
\end{asides}

\section{Closing thoughts}
We finish with some remarks and questions that this work raises.
Here and in the appendix, it will be convenient to abuse notation as follows: given $S\subseteq\Nat$\/, we write $D_S$ for the derivation $D_{\chi_S}$, where $\chi_S$ is the indicator function of~$S$. For example, with this notation $D_\Nat \equiv D_{\mathbf{1}}$\/.

\subsection*{Combinatorics of TF subsets of $\Z_+$}
We have been unable to find much in the literature on the combinatorial properties of TF subsets of $\Z_+$. Here are some elementary facts.

\begin{itemize}
\item Let $k\in\Nat$\/; then $S$ is TF if and only if $S+k$ is.
\item Finite unions of T-sets are TF. 
\item The set of odd numbers is non-TF, as is the set of even numbers. In particular, the complement of a non-TF set can be non-TF.
\item Subsets of TF sets are TF. (Immediate from the definition.) In particular, the intersection of two TF sets is TF.
\item If $S$ and $T$ are TF then so is $S\cup T$\/.
\end{itemize}

The last of these observations follows immediately if we grant ourselves Ruppert's result (Theorem~\ref{t:Ruppert} above). It also follows from our Theorem~\ref{t:wc-der}: for if $S$ and $T$ are TF subsets of $\Z_+$, then since $S+1$, $(S\cap T)+1$ and $T+1$ are also TF, the derivations $D_{S+1}$\/, $D_{T+1}$ and $D_{(S\cap T)+1}$ are all weakly compact; whence
\[ D_{(S\cup T)+1}=D_{S+1}-D_{(S\cap T)+1} +D_{T+1}\]
is also weakly compact, so that by the other direction of Theorem~\ref{t:wc-der}, $(S\cup T)+1$ and hence $S\cup T$ are~TF.
It also seems worth giving a direct, combinatorial proof, which to our knowledge is not spelled out in the existing literature (cf.~\cite[Remark 18]{Ruppert_WAPsets}).

\begin{proof}
Let $A_1, A_2$ be subsets of $\Z_+$ and suppose that $A_1\cup A_2$ is not TF. By Lemma~\ref{l:alternative-defn} there exist $a_1<a_2<\ldots$ and $b_1<b_2<\ldots$ in $\Z_+$\/, such that $\{a_m+b_n \;:\; 1\leq m\leq n\}\subseteq A_1\cup A_2$\/.
Let
\[ \begin{aligned}
 E & =\{ (m,n) \in \Nat^2 \;:\; m < n, a_m+b_n\in A_1\}\,, \\
 F & =\{ (m,n) \in \Nat^2\;:\; m < n, a_m+b_n\in A_2\setminus A_1\}\,.
\end{aligned} \]
The sets $E$ and $F$ can be regarded as a partition of the set of $2$-element subsets of $\Nat$. Hence, by \named{Ramsey's theorem}
 \cite[Theorem~9.1.2]{Die_graph3rd},
 there exists either an infinite set $S\subset\Nat$ such that $\{ (x,y) \in S^2 \;:\; x < y \}\subseteq E$\/, or an infinite set $T\subset\Nat$ such that $\{ (x,y)\in T^2 \;:\;x < y\}\subseteq F$\/.

In the former case, enumerate $S$ as $s_0<s_1<s_2<\ldots$\/, and put
$c_j=a_{s_{j-1}}$\/, $d_j=b_{s_j}$ for $j\in\Nat$.
Then $c_i+d_j \in A_1$ for all $1\leq i\leq j$\/; therefore, by Lemma~\ref{l:alternative-defn}, $A_1$ is not TF.

In the latter case, a similar argument shows that $A_2$ is not TF. We conclude that at least one of $A_1$ and $A_2$ is non-TF, which proves the desired result.
\end{proof}

\begin{asides}
Ramsey's theorem seems overkill, but just using one application of the p'hole principle doesn't seem to work -- YC
\end{asides}

\subsection*{Generalizations to other (semigroup) algebras?}
We have relied heavily on the convenient parametrization of ${\mathop{\rm Der}}(\lp{1}(\Z_+),\lp{1}(\Z_+)^*)$ by elements of $\lp{\infty}(\Nat)$.
There are analogous parametrizations for $\Z_+^k$, where $k\geq 2$, but it is not clear to the authors if they allow one to obtain reasonable higher-rank analogues of Theorem~\ref{t:wc-der}.

We can at least make one general observation.

\begin{defn}
Let $A$ be a Banach algebra and $X$ a Banach $A$-bimodule. If $x\in X$\/, we say that $x$ is a \dt{weakly almost periodic element of $X$} if both $a\mapsto ax$ and $a\mapsto xa$ are weakly compact as maps from $A$ to $X$\/. The set of all weakly almost periodic elements of $X$ will be denoted by $\WAP(X)$\/.
\end{defn}

Combining Proposition~\ref{p:nec-cond} with Ruppert's result (Theorem \ref{t:Ruppert}), we see that if $D_\psi$ is weakly compact then $\psi\in\WAP(\ell^\infty(\Z_+))$,
where we identify $\psi\in \ell^\infty(\Nat)$ with $(0,\psi_1,\psi_2,\dots)\in\ell^\infty(\Z_+)$.
This is a special case of a more general result.

\begin{prop}
Let $A$ be a unital Banach algebra, let $D:A\to A^*$ be a weakly compact derivation, and let $\psi\in A^*$ be the functional $D(\cdot)({\sf 1})$\/. Then $\psi\in\WAP(A^*)$\/.
\end{prop}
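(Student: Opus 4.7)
The plan is to unpack both bimodule actions $a \mapsto a\cdot\psi$ and $a \mapsto \psi\cdot a$ using the derivation identity. With the standard dual-module conventions $(a\cdot f)(b) = f(ba)$ and $(f\cdot a)(b) = f(ab)$, the derivation rule $D(xy) = x\cdot D(y) + D(x)\cdot y$ gives, upon evaluating $D(ba)$ and $D(ab)$ at ${\sf 1}$,
\[ (a\cdot\psi)(b) \;=\; D(ba)({\sf 1}) \;=\; D(a)(b) + D(b)(a) \;=\; D(ab)({\sf 1}) \;=\; (\psi\cdot a)(b). \]
Thus, as operators from $A$ to $A^*$, both actions coincide and each equals $D + S$, where $S : A \to A^*$ is defined by $S(a)(b) = D(b)(a)$.

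The first summand $D$ is weakly compact by hypothesis. For the second, I would observe that $S$ factors as $S = D^* \circ \kappa_A$, where $\kappa_A : A \to A^{**}$ is the canonical embedding and $D^* : A^{**} \to A^*$ is the Banach-space adjoint of $D$; indeed $D^*(\kappa_A(a))(b) = \kappa_A(a)(D(b)) = D(b)(a) = S(a)(b)$. By \named{Gantmacher's theorem}, $D^*$ is weakly compact, and since the class of weakly compact operators is stable under pre- and post-composition with bounded operators, $S$ is weakly compact. Hence $D + S$ is weakly compact, so $\psi \in \WAP(A^*)$.

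There is no substantial technical obstacle; the proposition is essentially a one-line unpacking of the derivation identity at the unit, followed by an application of \named{Gantmacher's theorem}. The only thing to watch is keeping the left/right bimodule conventions straight, so that the two terms arising from $D(xy) = x\cdot D(y) + D(x)\cdot y$ regroup correctly into the original derivation $D$ plus the adjoint-derived operator $S$.
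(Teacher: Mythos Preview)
Your proof is correct and is essentially the same as the paper's: both compute $(\psi\cdot a)(b)=D(ab)({\sf 1})=D(a)(b)+D(b)(a)$, then recognize the second term as $D^*\kappa_A(a)(b)$ and invoke Gantmacher's theorem. The only cosmetic difference is that you handle both module actions simultaneously by noting the symmetry $D(ab)({\sf 1})=D(ba)({\sf 1})$, whereas the paper treats one side and appeals to a ``similar argument'' for the other.
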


\begin{proof}
Let $\kappa:A\to A^{**}$ be the canonical embedding. By
\named{Gantmacher's theorem}, $D^*:A^{**}\to A^*$ is weakly compact, so $D^*\kappa$ is also weakly compact. Note that $D^*\kappa(a)=D(\cdot)(a)$ for all $a\in A$\/.

Let $a\in A$\/, and consider $\psi\cdot a \in A^*$\/. For each $b\in A$ we have
\[ (\psi\cdot a)(b) = \psi(ab) =D(ab)({\sf 1}) = D(a)(b) + D(b)(a) \;;\]
thus $\psi\cdot a = D(a)+D^*\kappa(a)$\/. Since $D$ and $D^*\kappa$ are weakly compact, this shows that the map $a\mapsto \psi\cdot a$ is weakly compact. A similar argument shows that the map $a\mapsto a\cdot\psi$ is weakly compact, and so $\psi\in\WAP(A^*)$ as claimed.
\end{proof}

When $A=\lp{1}(\bbS)$ is the convolution algebra of a discrete monoid~$\bbS$, we may regard $A^*=\lp{\infty}(\bbS)$ as an algebra with respect to pointwise multiplication. The previous proposition shows that the functional $D(\cdot)(\delta_e)$ lies in $\WAP(A^*)$\/: is it the case that $h D(\cdot)(\delta_e)$ lies in $\WAP(A^*)$ for every $h\in\lp{\infty}(\bbS)$?

\subsection*{Acknowledgments}
The authors thank the referee and N. J. Laustsen (as editorial adviser) for a close reading of the text and for useful corrections.

\appendix

\section{A weakly compact derivation which is not $p$-summing}
\begin{defn}
Let $X$ and $Y$ be Banach spaces and let $p\in[1,\infty)$. We say that a bounded linear map $T:X\rightarrow Y$ is \dt{$p$-summing} if there exists $C>0$ such that:
\begin{equation}
\label{eq-psum}
\sum_{j=1}^m\norm{Tx_j}^p\le C^p\sup_{\phi\in X^*, \norm{\phi}\le1} \sum_{j=1}^m\abs{\phi(x_j)}^p,\qquad\textrm{for all $m\in\Nat$ and $x_1,\dots, x_m\in X$.}
\end{equation}
The least such $C$ is denoted by $\pi_p(T)$.
If no such $C$ exists 
(i.e. if $T$ is not $p$-summing) we write $\pi_p(T)=\infty$.
\end{defn}

Recall that in Proposition~\ref{p:first-example}, taking $S$ to be the set of integer powers of $2$ gives a derivation $D_S$ that is $1$-summing.
In this appendix, we construct a T-set~$A$ such that $D_A$\/, while weakly compact, is not $p$-summing for any finite~$p$.
To do this, we shall need some standard general results, which are collected in the following proposition for ease of reference.
\begin{prop}\label{p:portmanteau}
Let $X$ and $Y$ be Banach spaces and let $T\in B(X,Y)$\/.
\begin{YCnum}
\item\label{p:psum}
Let $1\leq p \leq q <\infty$\/. Then $\pi_p(T)\ge\pi_q(T)$.
\item\label{item:WC}
If $T$ is $p$-summing for some $p\in[1,\infty)$, then it is weakly compact.
\end{YCnum}
\end{prop}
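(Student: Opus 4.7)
The plan is to treat the two parts of the proposition separately, using standard techniques from the theory of $p$-summing operators. Part (i) has a self-contained elementary proof by a rescaling trick, while (ii) is most naturally deduced from Pietsch's factorization theorem.

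For (i), I would assume $T$ is $p$-summing (otherwise there is nothing to prove). Given $y_1,\dots,y_m\in X$, normalize by setting $\lambda_j=\norm{Ty_j}^{(q-p)/p}$ and $x_j=\lambda_j y_j$, so that $\sum_j\norm{Tx_j}^p=\sum_j\norm{Ty_j}^q$. Applying the $p$-summing inequality to the $x_j$ and then bounding $\sum_j\lambda_j^p\abs{\phi(y_j)}^p$ for each fixed $\phi$ via H\"older's inequality with conjugate exponents $q/p$ and $q/(q-p)$ yields
\[
\sum_j\norm{Ty_j}^q \le \pi_p(T)^p \Bigl(\sum_j\norm{Ty_j}^q\Bigr)^{(q-p)/q}\Bigl(\sup_{\norm{\phi}\le 1}\sum_j\abs{\phi(y_j)}^q\Bigr)^{p/q}.
\]
Dividing through by the first factor on the right and raising to the power $q/p$ gives the $q$-summing inequality with constant $\pi_p(T)$, so $\pi_q(T)\le\pi_p(T)$. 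The only wrinkle is the case $Ty_j=0$, where the normalization is ill-defined, but those terms can simply be dropped since they contribute zero to both sides.

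For (ii), I would invoke Pietsch's domination theorem: if $T$ is $p$-summing, there is a regular Borel probability measure $\mu$ on the weak-$*$ compact ball $B_{X^*}$ such that $\norm{Tx}\le\pi_p(T)\bigl(\int\abs{\phi(x)}^p\,d\mu(\phi)\bigr)^{1/p}$ for all $x\in X$. This expresses $T$ as a composition $\widetilde T\circ J_p$, where $J_p:X\to L^p(\mu)$ sends $x$ to the evaluation functional $\phi\mapsto\phi(x)$, and $\widetilde T$ is a bounded operator defined on the closure of $J_p(X)$ inside $L^p(\mu)$. For $p>1$ the space $L^p(\mu)$ is reflexive, so $T$ factors through a reflexive Banach space and is therefore weakly compact. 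The case $p=1$ is handled by part (i): $\pi_2(T)\le\pi_1(T)<\infty$, so $T$ is $2$-summing, and the $p>1$ argument applies.

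The main obstacle to a fully self-contained treatment is the appeal to Pietsch's theorem in (ii); since the appendix is collecting these results ``for ease of reference'', I would simply cite a standard source (for instance Diestel--Jarchow--Tonge) rather than reconstruct the proof. An alternative purely functional-analytic route to (ii)---factoring through the weakly compactly generated space $\overline{J_p(X)}$---still relies on the same domination inequality, so Pietsch seems unavoidable in any efficient presentation.
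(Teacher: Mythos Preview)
Your proposal is correct. The paper itself does not prove this proposition at all: it simply refers the reader to Diestel--Jarchow--Tonge, citing Theorem~2.8 there for part~(i) and deducing part~(ii) from the Pietsch factorization theorem (Theorem~2.17 in the same reference). Your sketch therefore goes further than the paper does, but is entirely aligned with it --- the rescaling-plus-H\"older argument you outline for~(i) is precisely the standard proof found in that reference, and your treatment of~(ii) via Pietsch domination and factorization through a reflexive $L^p(\mu)$ (with the $p=1$ case handled by first passing to $p=2$ via part~(i)) is exactly the route the paper's citation points to.
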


We refer to~\cite{Die_Jar_Ton} for the proofs: 
part~(i) may be found as \cite[Theorem 2.8]{Die_Jar_Ton}; and part~(ii) follows from the Pietsch factorization theorem, see \cite[Theorem 2.17]{Die_Jar_Ton}.

We now specialize to operators of the form $D_\psi$\/. The key observation is the following.

\begin{lem}\label{l:limbelow}
 Let $\psi\in\ell^\infty(\Nat)$ and $p\in[1,\infty)$ and $K<\pi_p(D_\psi)$. There exists $N\in\Nat$ depending on $\psi$, on $p$ and on~$K$,
 such that for each $\psi'\in\ell^\infty(\Nat)$ satisfying $\psi(k)=\psi'(k)$ for all $k<N$, we have $\pi_p(D_{\psi'})> K$. 
\end{lem}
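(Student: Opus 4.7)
The plan is to reduce the $p$-summing inequality witnessing $\pi_p(D_\psi)>K$ to a statement depending only on finitely many coordinates of $\psi$.

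First, I would unpack the definition. Since $\pi_p(D_\psi)>K$, there exist $m\in\Nat$ and $x_1,\dots,x_m\in\ell^1(\Z_+)$ such that
\[
\sum_{j=1}^m \norm{D_\psi(x_j)}_\infty^p \;>\; K^p \sup_{\nu\in\lp{1}(\Z_+),\,\norm{\nu}_1\le 1}\sum_{j=1}^m \abs{\nu(x_j)}^p,
\]
where I regard functionals on $\ell^1(\Z_+)$ as elements of $\ell^\infty(\Z_+)$ in the usual way. (This holds even when $\pi_p(D_\psi)=\infty$.) Both sides of this inequality depend continuously on each $x_j$ in the $\lp{1}$-norm; the right-hand side is continuous because $\abs{\nu(x_j)}\le\norm{x_j}_1$ uniformly in $\nu$, and the left-hand side is continuous because $D_\psi$ is bounded. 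So, by a standard density argument, I may replace the $x_j$ by finitely supported vectors $\tilde x_j$ without destroying the strict inequality; that is, I may assume $x_j\in\operatorname{span}(\delta_0,\dots,\delta_{M-1})$ for some $M\in\Nat$.

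Next I would locate the coordinates where the norms $\norm{D_\psi(x_j)}_\infty$ are realised. Using formula \eqref{eq:main_formula},
\[
 \abs{D_\psi(x_j)(\delta_k)} \;\le\; \sum_{n=1}^{M-1} \abs{(x_j)_n}\,\frac{n}{n+k}\,\norm{\psi}_\infty \;\le\; \frac{(M-1)\norm{\psi}_\infty \norm{x_j}_1}{k}
\]
for $k\geq 1$; since the finitely many $x_j$ have bounded $\lp{1}$-norm, there exists $K_0\in\Nat$ such that, for every $j$,
\[ \norm{D_\psi(x_j)}_\infty \;=\; \max_{0\le k\le K_0}\abs{D_\psi(x_j)(\delta_k)}. \]
(Equivalently: since each $D_\psi(x_j)\in c_0(\Z_+)$ the sup is attained at some finite index, and we take $K_0$ to be the maximum of these indices.) Now put $N:=M+K_0$. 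By \eqref{eq:main_formula}, for every $j\in\{1,\dots,m\}$ and every $k\le K_0$, the number $D_\psi(x_j)(\delta_k)$ depends only on $\psi_r$ for $r<N$.

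Finally, I would conclude. If $\psi'\in\ell^\infty(\Nat)$ satisfies $\psi'_r=\psi_r$ for all $r<N$, then $D_{\psi'}(x_j)(\delta_k)=D_\psi(x_j)(\delta_k)$ for all $j\le m$ and all $k\le K_0$, hence
\[ \norm{D_{\psi'}(x_j)}_\infty \;\ge\; \max_{0\le k\le K_0}\abs{D_{\psi'}(x_j)(\delta_k)} \;=\; \norm{D_\psi(x_j)}_\infty. \]
Summing the $p$-th powers,
\[
\sum_{j=1}^m \norm{D_{\psi'}(x_j)}_\infty^p \;\ge\; \sum_{j=1}^m \norm{D_\psi(x_j)}_\infty^p \;>\; K^p \sup_{\norm{\nu}_1\le 1}\sum_{j=1}^m \abs{\nu(x_j)}^p,
\]
which by definition forces $\pi_p(D_{\psi'})>K$, as required.

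There is no genuine obstacle here; the only point that needs care is the tail estimate on $\abs{D_\psi(x_j)(\delta_k)}$ for large $k$, which is the lever that converts the problem to a finite one by exploiting the factor $j/(j+k)$ in \eqref{eq:main_formula}.
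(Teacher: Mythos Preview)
Your proof is correct and follows essentially the same route as the paper: reduce to finitely supported $x_j$, use that $D_\psi(x_j)\in c_0$ so the sup-norm is attained at some finite index, and then take $N$ large enough that the relevant values $D_\psi(x_j)(\delta_k)$ depend only on $\psi_r$ with $r<N$. The only (harmless) slip is that the supremum on the right of the $p$-summing inequality should range over $\phi\in\ell^\infty(\Z_+)$ with $\norm{\phi}_\infty\le 1$, not over $\nu\in\ell^1$; since that quantity is independent of $\psi$ anyway, the argument is unaffected.
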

\begin{proof}
There are $x_1,\dots, x_m\in \ell^1(\Z_+)$ such that 
\begin{equation}
\label{eq-psumFAIL}\sum_{j=1}^m\norm{D_\psi(x_j)}^p> K^p\sup_{\phi\in \ell^\infty, \norm{\phi}\le1}\sum_{j=1}^m\abs{\phi(x_j)}^p.
\end{equation}
Without any loss of generality we may take $x_1,\dots,x_n\in c_{00}$; write $x_j=\sum_{i=0}^{l(j)}\alpha_{i,j}\delta_i$.
For each $j\in\{1,\dots,m\}$, since $D_\psi(x_j)\in c_0$\/, there exists $n(j)\in\Nat$ with 
$\abs{D_\psi(x_j)(\delta_{n(j)})}=\norm{D_\psi(x_i)}$.

Fix $N>\max\{l(1)+n(1),\dots,l(m)+n(m)\}$\/, and let $\psi'\in\ell^\infty(\Nat)$ be such that $\psi(k)=\psi'(k)$ for all $k<N$\/. Observe now that for each~$j$ we have
\[ \begin{aligned}
D_\psi(x_j)(\delta_{n(j)})
 & = \sum_{i=1}^{l(j)}\alpha_{i,j}\frac{i}{i+n(j)}\psi(i+n(j)) \\
 & = \sum_{i=1}^{l(j)}\alpha_{i,j}\frac{i}{i+n(j)}\psi'(i+n(j))
 & =  D_{\psi'}(x_j)(\delta_{n(j)})\,.
\end{aligned} \]
Therefore,
\[\sum_{j=1}^m\norm{D_{\psi'}(x_j)}^p\ge\sum_{j=1}^m\abs{D_{\psi'}(x_j)(\delta_{n(j)})}^p
= \sum_{j=1}^m\abs{D_\psi(x_j)(\delta_{n(j)})}^p
= \sum_{j=1}^m\norm{D_\psi(x_j)}^p.\]
Combining this with Equation \eqref{eq-psumFAIL} yields $\pi_p(D_{\psi'}) > K$\/, and the result follows. 
\end{proof}

We can now give the promised example.
\begin{thm}\label{t:example}
There exists a T-set $A$ such that $D_A$ is not $p$-summing for any $p<\infty$.
\end{thm}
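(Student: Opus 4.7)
The plan is to build $A$ as a disjoint union $\bigsqcup_{p\geq 1} B_p$ of finite arithmetic progressions whose common differences $d_p$ are pairwise distinct and whose gaps grow rapidly. Pairwise distinct differences together with rapid spacing will ensure the T-set property, while the block structure --- together with Lemma~\ref{l:limbelow} --- will force $\pi_p(D_A) > p$ for every $p\in\Nat$. Since $\pi_q$ is non-increasing in $q$ by Proposition~\ref{p:portmanteau}\ref{p:psum}, this already implies $\pi_q(D_A) = \infty$ for every $q \in [1,\infty)$, whence $D_A$ fails to be $p$-summing for any finite $p$.

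At stage $p$ of the recursion, suppose $B_1, \dots, B_{p-1}$ have been chosen; set $A_{p-1} := B_1 \cup \cdots \cup B_{p-1}$, pick $d_p \in \Nat$ distinct from $d_1, \dots, d_{p-1}$, and pick $P_p$ larger than both $\max A_{p-1}$ and the integer $N_{p-1}$ produced at the previous stage. Consider the auxiliary element
\[
 \psi^{(p)} := \chi_{A_{p-1} \,\cup\, \{P_p + k d_p \,:\, k \geq 1\}} \in \lp{\infty}(\Nat).
\]
Its essential support contains the infinite arithmetic progression $\{P_p + kd_p : k\geq 1\}$ and hence has Banach density at least $1/d_p > 0$; by Proposition~\ref{p:BD} it is not~TF, by Theorem~\ref{t:wc-der} the derivation $D_{\psi^{(p)}}$ is not weakly compact, and so by Proposition~\ref{p:portmanteau}\ref{item:WC} we have $\pi_p(D_{\psi^{(p)}}) = \infty$. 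Apply Lemma~\ref{l:limbelow} with this $\psi^{(p)}$ and $K = p$ to obtain $N_p \in \Nat$ such that every $\psi' \in \lp{\infty}(\Nat)$ agreeing with $\psi^{(p)}$ on $\{0, \dots, N_p - 1\}$ satisfies $\pi_p(D_{\psi'}) > p$. Then set $B_p := \{P_p + k d_p : k \geq 1\} \cap \{0, \dots, N_p - 1\}$ (a finite AP disjoint from $A_{p-1}$) and impose the forward constraint $P_{p+1} \geq N_p + p$ at the next stage. These choices ensure that no future block enters $\{0, \dots, N_p - 1\}$, so $\chi_A = \psi^{(p)}$ on that segment and hence $\pi_p(D_A) > p$.

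It remains to verify that $A := \bigcup_p B_p$ is a T-set, i.e.\ that $A \cap (A-n)$ is finite for every $n \geq 1$. A within-block pair $(a,a+n) \in B_p \times B_p$ requires $d_p \mid n$, and since the $d_p$ are pairwise distinct only finitely many blocks can contribute, each contributing at most $\abs{B_p}$ such pairs. A cross-block pair with $a \in B_p$, $a+n \in B_{p'}$ and $p<p'$ requires $P_{p'} - N_p \leq n$, and the forward constraint $P_{p+1} \geq N_p + p$ leaves only finitely many pairs $(p,p')$ satisfying this for each fixed $n$, each contributing boundedly many actual pairs. The main delicate point --- and what I expect to be the principal obstacle to formalise --- is precisely this reconciliation: Lemma~\ref{l:limbelow} forces $\chi_A$ to mimic a genuinely non-TF pattern on longer and longer initial segments, and it is the device of using a \emph{fresh} common difference $d_p$ at each stage that prevents those local imitations from accumulating into a globally non-TF set.
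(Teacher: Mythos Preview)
Your construction is essentially the paper's: build $A$ as an increasing union of finite arithmetic progressions, at each stage using Lemma~\ref{l:limbelow} on the indicator of a set containing an \emph{infinite} arithmetic progression to force $\pi_k(D_A)>k$, then truncate. The paper makes two simplifying choices you do not. First, it takes the common difference at stage $k$ to be $k$ itself; then consecutive gaps in $A$ tend to infinity, and the T-set property is immediate --- no within-block/cross-block case analysis is needed. Second, it observes directly that a set containing an infinite arithmetic progression is non-TF (this is obvious from the definition), rather than going via Banach density and Proposition~\ref{p:BD}, which is considerable overkill.

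Your T-set verification is a little thin as written. The within-block step is fine once one says explicitly that a fixed $n$ has only finitely many divisors, so only finitely many of the pairwise-distinct $d_p$ can divide it. For the cross-block step you should note that the case $a\in B_{p'}$, $a+n\in B_p$ with $p'>p$ is vacuous (since $a>P_{p'}>\max A_{p'-1}\geq\max B_p$), and for the remaining case $p<p'$ your bound $P_{p'}-N_p\leq n$ only forces $p'$ to be bounded once you know the $N_p$ are increasing --- which is not automatic from Lemma~\ref{l:limbelow}, but can be arranged since any $N'\geq N_p$ also works in that lemma. None of this is fatal, but the paper's choice $d_k=k$ sidesteps all of it.
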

\begin{proof}
The set $A$ will be the disjoint union of a sequence of finite arithmetic progressions whose ``skip size'' tends to infinity.
 For each $k\in\Z_+$, we shall construct, recursively, $A(k)\subset \Nat$ and $c_k\in\Z_+$ such that 
\begin{itemize}
\item[(a)]  $c_k>\max A(k)$;
\item[(b)] $\pi_k(D_B)>k$ for each $B\subset \Nat$ satisfying $B\cap\{1,\dots, c_k\}=A(k)\cap\{1,\dots, c_k\}$;
\item[(c)] $A(k)\supset A(k-1)$ for all $k\geq 1$\/.
\end{itemize}

Let $A(0)=\emptyset$ and $c_0=0$. For each $k\in\Nat$ assume that we have already defined $A(k-1)\subset \Nat$ and $c_{k-1}\in\Nat$ satisfying conditions (a) and (b).
Let $S:=A(k-1)\cup(c_{k-1}+k\Nat)$.
Since $S$ contains an infinite arithmetic progression, it is non-TF. Hence by Theorem~\ref{t:wc-der} $D_S$ is not weakly compact, and so by part \ref{p:psum} of Proposition~\ref{p:portmanteau} it is not $k$-summing.
In particular, $\pi_k(D_S)>k$, so by applying Lemma~\ref{l:limbelow} with $\psi= \chi_S$, we see that there exists $M$ such that
\begin{equation}\label{eq:p-sum-norm}
\pi_k(D_{S\cap\{1,\dots,m\}})>k\quad\textrm{for all $m\geq M$.}
\end{equation}
Choose $n$ such that $c_{k-1}+kn \geq M$\/, and take
\[ A(k):= S\cap\{1,\dots,c_{k-1}+kn\} = A(k-1) \cup\{ c_{k-1}+k, c_{k-1}+2k, \dots, c_{k-1}+n k\}\,.\]
By construction this choice satisfies condition (c). Applying Lemma \ref{l:limbelow} with $\psi= \chi_{A(k)}$\/, we can choose $c_k$ satisfying conditions (a) and~(b), and so our construction may continue.

Set $A:=\bigcup_{k=1}^\infty A(k)$.
Then for each $k\in\Nat$, $A\cap\{1,\dots, c_k\}=A(k)\cap\{1,\dots, c_k\}$ and so $\pi_k(D_A)>k$.
Thus by Propo\-sition~{\ref{p:portmanteau}\ref{p:psum}} $\pi_p(D_A)=\infty$ for all~$p\in[1,\infty)$. Finally, if we
enumerate the elements of $A$ as an increasing sequence $a_1<a_2<\dots$\/,
then $a_{i+1}-a_i\rightarrow\infty$; it follows easily that $A$ is a T-set.
\end{proof}


\vfill

\noindent%
\begin{tabular}{l@{\hspace{20mm}}l}
D\'epartement de math\'ematiques  &
	Departamento de Matem\'atica, \\
\text{\hspace{1.0em}} et de statistique, & \\
Pavillon Alexandre-Vachon &
	Instituto Superior T\'ecnico,  \\
Universit\'e Laval &
	  Av. Rovisco Pais \\
Qu\'ebec, QB &
	 1049-001 Lisboa \\
Canada, G1V 0A6 &
	 Portugal \\
	& \\
{\bf Email: \tt y.choi.97@cantab.net} &
	{\bf Email: \tt mheath@math.ist.utl.pt}
\end{tabular}

\end{document}